\newcommand{\ie}{i.e., }
\newcommand{\range}{\mathop\mathrm{range}}
\newcommand{\Ttran}{\mathsf{T}}
\newcommand{\T}{\mathsf{T}}
\newcommand{\fro}{\mathsf{F}}
\newcommand{\defi}{:=}
\newcommand{\van}{\mathsf{Van}}
\newcommand{\normml}{\left\vert\kern-0.25ex\left\vert\kern-0.25ex\left\vert}  
\newcommand{\normmr}{\right\vert\kern-0.25ex\right\vert\kern-0.25ex\right\vert}
\newcommand{\order}{\mathcal{O}}
\newcommand{\R}{\mathbb{R}}
\newcommand{\rk}{\mathrm{rank}}
\newcommand{\Ta}{\Xi}
\newcommand{\Tb}{\Gamma}
\newcommand{\cheb}{\mathsf{Cheb}}
\newcommand{\gap}{\mathsf{relgap}}
\newcommand{\old}{\mathrm{old}}
\newcommand{\new}{\mathrm{new}}
\newcommand{\mpr}{\mathbf{u}}
\newcommand{\figsize}{0.7\textwidth}
\providecommand{\diagm}{\mathrm{diag}}
\providecommand{\diagM}[1]{\mathrm{diag}(#1)}
\providecommand{\abs}[1]{\lvert#1\rvert}
\providecommand{\norm}[1]{\lVert#1\rVert}
\providecommand{\bignorm}[1]{\bigl\lVert#1\bigr\rVert}
\crefname{Assumption}{Assumption}{Assumptions}
\crefname{algocf}{Algorithm}{Algorithms}
\newcommand{\edit}[1]{\textcolor{black}{#1}} 
\newcommand{\editr}[1]{\textcolor{black}{#1}} 
\newcommand{\redit}[1]{\textcolor{black}{#1}} 
\begin{document}
    \title{{\redit{On a}} randomized small-block Lanczos method for large-scale null space computations
    }
    \author{
        Daniel~Kressner\thanks{Institute of Mathematics, EPF Lausanne, 1015 Lausanne, Switzerland (\href{mailto:daniel.kressner@epfl.ch}{daniel.kressner@epfl.ch} and \href{mailto:nian.shao@epfl.ch}{nian.shao@epfl.ch})}
        \and
        Nian~Shao\footnotemark[1]
        }
    \headers{Randomized Lanczos for null space computations}{Daniel~Kressner and Nian~Shao}
    \maketitle   
        \begin{abstract}	
 Computing the null space of a large sparse matrix $A$ is a challenging computational problem, especially if the nullity---the dimension of the null space---is \edit{not small}. When \redit{applying a block Lanczos method to $A^\mathsf{T} A$} for this purpose, conventional wisdom suggests to use a block size $d$ that is not smaller than the nullity. In this work, we show how randomness can be utilized to allow for smaller $d$ without sacrificing convergence or reliability. Even $d = 1$, corresponding to the standard single-vector Lanczos method, becomes a safe choice. This is achieved by using a small random diagonal perturbation, which moves the zero eigenvalues of $A^\mathsf{T} A$ away from each other, and a random initial guess. We analyze the effect of the perturbation on the attainable quality of the null space and derive convergence results that establish robust convergence for $d=1$. As demonstrated by our numerical experiments, a smaller block size combined with restarting and partial reorthogonalization results in reduced memory requirements and computational effort. It also allows for the incremental computation of the null space, without requiring a priori knowledge of the nullity. Our algorithm is best suited for situations when the nullity of $A$ is \redit{moderate}.
        \end{abstract}
        
        \begin{keywords}
            Sparse matrix, null space, Krylov subspace method, randomized algorithm
        \end{keywords} 
        \begin{AMS}
            65F15, 65F50, 68W20
        \end{AMS}
\maketitle

\section{Introduction}
This work is concerned with computing the null space of a large sparse matrix $A\in \R^{m\times n}$, a problem
that finds numerous applications. For example, the connected components of an undirected graph can be identified by the null space of the graph Laplacian~\cite{Merris1994}. In computational topology, the null space of cohomology matrices reflects the generators of the cohomology groups. The nullity helps determine the Betti numbers of a topological space~\cite{Edelsbrunner2022}. Furthermore, null space calculations play a crucial role in various engineering problems, such as system dimensionality reduction~\cite{Ye2006}, finite element models~\cite{Shklarski2009}, linear equality constrained state estimation~\cite{Hewett2010}, grasp control~\cite{Platt2010}, computer vision~\cite{Zhang2016a}, and multivariate polynomial root finding~\cite{Batselier2014}.

In several of these applications, the dimension of the null space is not necessarily small and can easily reach a few hundred or even thousands.
For small- to medium-sized matrices, the singular value decomposition (SVD)~\cite{Golub2013} or a suitably modified QR decomposition~\cite{Foster1986} are the methods of choice for 
computing null spaces. For large-sized matrices, these dense methods become too expensive. There are several structured null space solvers to deal with specific large-scale situations, such as~\cite{Berry1985} for banded matrices, \cite{Gotsman2008} for matrices with small nullity and amenable to sparse LU factorization with partial pivoting, \cite{Foster2013} for sparse matrices amenable to a rank-revealing sparse QR factorization~\cite{Davis2011a}, and \cite{Park2023} for a very tall and skinny matrix.

When $A$ is large and has no particular structure beyond being sparse, one needs to resort to Krylov subspace methods, which utilize matrix-vector products with $A$ and its transpose $A^\T$. Let $N$ denote the null space dimension of $A$. When $N = 1$, standard choices include the Lanczos method for computing the smallest eigenvalue and eigenvector of $A^{\Ttran}A$ or the mathematically equivalent, but numerically more reliable
Golub--Kahan bidiagonalization directly applied to $A$~\cite[Chapter~10]{Golub2013}. When $N>1$, the Lanczos method (with a single starting vector) becomes unreliable and prone to miss components of the null space. In fact, in exact arithmetic only a one-dimensional subspace of the null space can be extracted via Lanczos~\cite[Section~12.2]{Parlett1998}. Block Lanczos methods address this issue by utilizing a block of $d>1$ starting vectors instead of a single starting vector \cite[Section~13.10]{Parlett1998}. 
Common wisdom (see \cite[Section~13.12]{Parlett1998} for examples) and existing convergence analyses all suggest to choose $d$ not smaller than the size of the eigenvalue cluster of interest. Applied to null space computation this would require $d \ge N$ and in exact arithmetic this condition is indeed necessary. The presence of round-off error blurs the situation.
\editr{
As an example, consider the $10064\times 10064$ diagonal matrix 
\begin{equation} \label{eq:matrixA}
    A = \diagm \Bigl(0,\dotsc,0,1+\frac{1}{10000},1+\frac{2}{10000},\dotsc,2\Bigr),
\end{equation}
that is, the first $64$ diagonal entries are zero.} \redit{Because this matrix is symmetric positive semi-definite, one can apply block Lanczos directly to $A$} in order to compute its null space. \redit{When} choosing a block of $d$ (Gaussian) random starting vectors, one would expect to obtain no more than $d$ (approximate) zero eigenvalues and corresponding eigenvectors. However, when executing block Lanczos \editr{with full reorthogonalization} in double precision, \editr{limiting it to $1024$ matrix-vector multiplications with $A$ in total} and declaring eigenvalues less than $10^{-4}$ as zero, we obtained the results reported in~\cref{tabintro}. \editr{
\begin{table}[htbp]
    \centering
    \caption{Null space dimension obtained when applying block Lanczos with $d$ random starting vectors \editr{for 1024 matrix-vector multiplications} to the matrix from~\cref{eq:matrixA}.}
    \label{tabintro}
    \editr{
    \begin{tabular}{|c|c|c|c|c|c|c|c|}
        \hline
        Block size $d$ & 1 & 2 & 4 & 8 & 16 & 32 & 64  \\ \hline 
        Dimension &   45 & 46 & 48 & 48 & 48 & 64 & 64\\ \hline 
    \end{tabular}}
\end{table}
While the null space dimension estimated by block Lanczos with small block size $d$ is erratic and often smaller than $64$, it is always larger than $d$. In fact, already for $d = 32$, the correct null space dimension is detected.} Experiments with other matrices lead to similar findings, suggesting that round-off error breaks some of the curse incurred by eigenvalue clusters, but it does not fully address the convergence issues of single-vector or small-block Lanczos methods either.

The situation described above is reminiscent of recent work by Meyer, Musco and Musco~\cite{Meyer2023} on the single-vector Lanczos method for low-rank approximation. Taking the randomness of the starting vector into account they establish a convergence result that still requires the (larger) singular values to be distinct but the dependence of the complexity on the gaps between eigenvalues is very mild, in fact logarithmic. Although single-vector or small-block Krylov subspace methods do not satisfy gap-independent convergence bounds~\cite{Li2015,Saad1980,Musco2015,Yuan2018}, the theory in~\cite{Meyer2023} suggests that small random perturbations of $A$ can easily break the adverse role of repeated singular values. As we have seen, perturbations due to round-off are not sufficient to achieve this effect, but (slightly larger) random perturbations will do, thanks to eigenvalue repulsion results.

\edit{Inspired by~\cite{Meyer2023}, we propose a randomized small-block Lanczos method for null space calculation. 
Our method assumes that there is a significant gap between the zero and nonzero singular values. We first modify the matrix $A^{\Ttran}A$ by adding a small random diagonal perturbation, which separates its zero eigenvalues. This separation enables us to use the small-block Lanczos method to capture an approximation for the \emph{whole} null space basis of $A$.} 
Compared to the task of low-rank approximation, there are two major differences. First, in low-rank approximation, the largest few singular values are usually unknown and, in most applications, they do not form tight large clusters. In the context of null spaces, the desired singular values form one large cluster of zeros. Second, while convergence to the relevant invariant subspace is not necessary for low-rank approximation~\cite{Drineas2019}, such convergence is imperative to obtain a good null space approximation.

The rest of this paper is organized as follows.
\edit{In \Cref{sec:perturbationtheory}, we analyze the impact of the random perturbation on $A^{\Ttran}A$.} 
    In particular, \cref{thmgap} provides a new eigenvalue repulsion result for the perturbed zero eigenvalues of $A^{\Ttran}A$. Our result differs from an existing result~\cite[Lemma 12]{Meyer2023} in two aspects: (1) By focusing on the repulsion of zero eigenvalues rather than the entire spectrum, we establish $\order(\epsilon)$ gaps instead of $\order(\epsilon^{2})$ gaps. (2) By considering relative eigenvalue gaps, we establish \edit{a} lower bound independent of the norm of $A$. At the same time, the perturbation incurs a limit on the attainable accuracy of the null space approximation, and we quantify this effect by a perturbation analysis.

    \Cref{Sec:ConvAnal} performs a self-contained convergence analysis of the randomized single-vector Lanczos method \edit{(applied to the perturbed matrix)}. 
    While our analysis shares similarities with \cite{Meyer2023}, we achieve more refined results by focusing on $d = 1$ 
    and considering the convergence of individual Ritz vectors. This analysis cumulates in~\cref{thmLan,thmmain}, establishing bounds on the null space approximation, whose sharpness is verified by the numerical results. 

    In \Cref{sec:implementation}, we consider several practical improvements, 
    including preconditioning, restarting, and partial reorthogonalization\redit{, culminating into \cref{algoad}, our main algorithm. Note that, due to restarting, the convergence analysis of
    \Cref{Sec:ConvAnal} does not apply to this algorithm.} 
    We also stress that our algorithm is conceptually quite close to existing restarted eigenvalue solvers, such as ARPACK~\cite{Lehoucq1998}, with the notable exception of the partial reorthogonalization strategy (which becomes important when dealing with larger null space dimensions). 
    
    Finally, \Cref{Sec:NumExp} contains several numerical results for applications, such as the computation of connected graph components and cohomology computation. These numerical experiments not only validate our theoretical findings regarding the randomized single-vector Lanczos method but also highlight the efficiency of the small-block Lanczos method. Notably, it highlights when our method is preferable compared to other null space solvers, particularly when the nullity is substantial and memory constraints are a limiting factor.
    
\subsection{Assumptions and notation}

Throughout this paper we assume that $A \in \R^{m\times n}$ satisfies $m\ge n$ and has $N\ge 1$ zero singular values.
We let $\underline{\sigma}$ denote the smallest positive singular value of $A$. We assume that there is a (significant) gap between $\underline{\sigma}$ and $0$, which allows us to pick $\epsilon$ such that $0 < \epsilon<\underline{\sigma}^{2}$. Such an assumption on the separation between zero and non-zero singular values is reasonable for matrices that have an exact null space in the absence of round-off error. \redit{Our method is not suitable when the singular values decay gradually to zero; see~\cite{Beckermann2019} for examples of such matrices.}
\begin{remark}
    For null space computation, the cases $m\geq n$ and $m<n$ are fundamentally different because $m<n$ implies that $A$ has a null space of dimension at least $n-m$. \edit{Our algorithm can also be applied to this situation and our analysis carries over. However, when $n$ is much larger than $m$ (say, $n=2m$), its efficiency is compromised by the large null space dimension. In these situations,}
    one needs to resort to dense methods like the SVD, \edit{unless there is additional structure that can be exploited}.
\end{remark}

Throughout the remainder of the paper, we use $\norm{\cdot}$ to denote the Euclidean norm of a vector and the spectral norm of a matrix.
\subsection{Basic algorithm}
In accordance with the first step, we define
\begin{equation} \label{eq:defB}
    B := A^{\Ttran}A+\epsilon D,
\end{equation}  
where $D$ is diagonal with nonnegative diagonal entries and $\epsilon>0$ controls the strength of the perturbation.
The initial $\Omega\in\R^{n\times d}$ is chosen as a Gaussian random matrix, \ie all entries are i.i.d. (independent and identically distributed) from the standard normal distribution $\mathcal{N}(0,1)$.
\edit{In the second step, we perform block Lanczos with $\ell$ iterations to compute an orthonormal basis $Z_{\ell}$ for the Krylov subspace\footnote{\edit{Note that our definition of a block Krylov subspace is different from some of the literature~\cite{Gutknecht2009}, but consistent with~\cite{Stewart2001,Meyer2023}.}} 
    \begin{equation*}
        \mathcal{K}_{\ell}(B,\Omega) \defi \range[\Omega,B\Omega,\dotsc,B^{\ell-1}\Omega].
    \end{equation*}
    To simplify the discussion, we will always assume that $\mathcal{K}_{\ell}(B,\Omega)$ has full rank and, therefore, 
    $Z_{\ell}$ is of size $n\times \ell d$. \redit{For this subspace to contain any approximation to the $N$-dimensional null space, $\ell d$ must be at least $N$.} Note that the block Lanczos method also \redit{returns} the compression $T_{\ell}=Z_{\ell}^{\Ttran}BZ_{\ell}\in\R^{\ell d\times\ell d}$.
    We use a Rayleigh--Ritz process to extract an approximate null space from $\mathcal{K}_{\ell}(B,\Omega)$. More specifically, the spectral decomposition of $T_{\ell}$ is computed to 
    obtain the smallest $N$ eigenvalues of $T_{\ell}$ (called Ritz values) along with an orthonormal basis $V_Z$ of the corresponding eigenvectors. Then $V=Z_{\ell}V_{Z}$ contains the corresponding Ritz vectors, which tend to approximate eigenvectors belonging to the smallest eigenvalues of $B$. This makes it natural to choose $V$ as an approximate null space basis of $A$, leading to~\cref{algo}.
\begin{algorithm2e}[htbp]
	\caption{Randomized small-block Lanczos for null space computation} \label{algo}
	\KwIn{Matrix $A\in\R^{m \times n}$ with $m\ge n$ and $N$ zero singular values.  Perturbation parameter $\epsilon>0$. Block size $d$. Number of iterations $\ell$ \redit{such that $\ell d \ge N$}.}
    \KwOut{Orthonormal basis $V$ approximating null space of $A$.}
    Set $B=A^{\Ttran}A+\epsilon D$, with a diagonal matrix $D$ having diagonal entries uniformly i.i.d. in $[0,1]$\;
    Draw Gaussian random matrix $\Omega\in\R^{n\times d}$\;
    \edit{Perform the block Lanczos method for $\ell$ iterations to compute an orthonormal basis $Z_{\ell}\in\R^{n\times \ell d}$ for $\mathcal{K}_{\ell}(B,\Omega)$ and the block tridiagonal matrix $T_{\ell}=Z_{\ell}^{\Ttran}BZ_{\ell}\in\R^{\ell d\times \ell d}$\;}
    Compute an orthonormal basis \edit{$V_{Z}\in\R^{\ell d\times N}$} \edit{of eigenvectors} associated with $N$ smallest eigenvalues of $T_{\ell}$\; 
   \Return{$V=Z_{\ell}V_{Z}$}\;
\end{algorithm2e}}

\edit{Note that the matrix $B$ is never formed explicitly in~\cref{algo}, but it is only accessed through matrix-vector products, which are carried out through matrix-vector products with the original matrix $A$ and $A^\Ttran$: $Bx = A^{\Ttran}(Ax) + \epsilon (Dx)$.}
\begin{remark}
\edit{As explained above,}
Line~\edit{4} of \cref{algo} extracts \edit{a} null space approximation \edit{via} computing the spectral decomposition of $T_{\ell} = Z_{\ell}^\T B Z_{\ell}$.
Alternatively, one could remove the perturbation and compute the spectral decomposition of $Z_{\ell}^\T A^\T A Z_{\ell}$ or, equivalently, the SVD of $AZ_{\ell}$. Preliminary numerical experiments indicate that this only yields minor accuracy improvements, at best, not justifying the extra computational effort needed to perform the alternative procedure.  
\end{remark}

\section{Perturbation analysis} \label{sec:perturbationtheory}

The purpose of this section is to quantify the impact
of the (random) perturbation $\epsilon D$ introduced by \cref{algo}.

\subsection{Perturbation and repulsion of zero eigenvalues}

We start by considering the impact of the perturbation on the $N$ zero eigenvalues of $A^\T A$.
Suppose that $0<\epsilon\|D\| <\underline{\sigma}^{2}$,
where \edit{we recall that} $\underline{\sigma}$ denotes the smallest nonzero singular value of $A$. Then the zero and nonzero eigenvalues of $A^\T A$ will not be mixed by the perturbation $\epsilon D$. Indeed, using that $D$ is positive semi-definite, it follows from 
Weyl's inequality~\cite[Theorem 4.3.1]{Horn2012} that the eigenvalues $\lambda_1,\dotsc, \lambda_n$ of $B$ satisfy
\begin{equation} \label{eq:propevB}
    0\leq \lambda_{1}\leq \dotsb\leq \lambda_{N}\leq \epsilon \|D\| < \underline{\sigma}^{2}  \leq \lambda_{N+1}\leq \dotsb\leq \lambda_{n}. 
\end{equation}

\cref{thmgap} below shows that it is unlikely that the perturbed zero eigenvalues $\lambda_{1}, \dotsc, \lambda_{N}$ remain very close to each other when $D$ is randomly chosen. To establish this eigenvalue repulsion result, we follow the proof of~\cite[Lemma 12]{Meyer2023} and make use of the following proposition.
 \begin{proposition}[Equation~(1.11) in~\cite{Aizenman2017} attributed to~\cite{Minami1996}]
    \label{repulsion}
    Let $B_0 \in\R^{n\times n}$ be symmetric, and let $D\in\R^{n\times n}$ be a random diagonal matrix with the diagonal entries drawn i.i.d. from a distribution with bounded probability density function $\rho(\cdot)$. Then for any bounded interval $\mathcal{I}\subset \R$ and $\epsilon > 0$, the inequality
    \begin{equation*}
        \Pr(B_0 + \epsilon D \text{ has at least $2$ eigenvalues in }\mathcal{I})\leq C_0 (\epsilon^{-1} \norm{\rho}_{\infty}\abs{\mathcal{I}}n)^{2},
    \end{equation*}
    holds for some universal constant $C_0$, where $\abs{\mathcal{I}}$ is the length of $\mathcal{I}$.
\end{proposition}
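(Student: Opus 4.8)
The statement is the finite-dimensional form of Minami's estimate, so the plan is to follow Minami's argument as presented in~\cite{Aizenman2017}. The first step is to pass from the probability to a second moment: let $N := N(\mathcal{I})$ denote the (random) number of eigenvalues of $B := B_0 + \epsilon D$ lying in $\mathcal{I}$. Since $N$ is a nonnegative integer, $N(N-1) \ge 2\cdot\mathbf{1}_{\{N\ge 2\}}$, so it suffices to prove $\mathbb{E}[N(N-1)] \le 2C_0(\epsilon^{-1}\norm{\rho}_\infty\abs{\mathcal{I}}n)^2$. A scaling reduction then lets us take $\epsilon = 1$: replacing $(B_0,\mathcal{I})$ by $(\epsilon^{-1}B_0,\epsilon^{-1}\mathcal{I})$ leaves $N$ invariant and leaves the density of the diagonal entries of $D$ (hence $\norm{\rho}_\infty$) unchanged, while multiplying $\abs{\mathcal{I}}$ by $\epsilon^{-1}$; so it remains to show $\mathbb{E}[N(N-1)] \le 2C_0(\norm{\rho}_\infty\abs{\mathcal{I}}n)^2$ when $\epsilon = 1$.

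Next I would localize the second moment to pairs of coordinates. Let $P := P_{\mathcal{I}}(B)$ be the orthogonal spectral projection of $B$ associated with $\mathcal{I}$, so that $N = \mathrm{tr}(P)$ and $P = P^\T = P^2$. Using $\mathrm{tr}(P) = \mathrm{tr}(P^2) = N$ and writing both traces out in coordinates (using $P = P^\T$), one finds
\[ N(N-1) \;=\; N^2 - N \;=\; \sum_{1\le j\ne k\le n}\bigl(P_{jj}P_{kk} - P_{jk}^2\bigr), \]
since the diagonal ($j=k$) terms cancel, and each summand is the determinant of a $2\times2$ principal submatrix of the positive semidefinite matrix $P$, hence nonnegative. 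Taking expectations, it now suffices to show, for every fixed pair $j\ne k$,
\[ \mathbb{E}\bigl[P_{jj}P_{kk} - P_{jk}^2\bigr] \;\le\; C_1\bigl(\norm{\rho}_\infty\abs{\mathcal{I}}\bigr)^2 \]
with a universal constant $C_1$; summing these at most $n^2$ bounds and undoing the scaling then yields the claim with $C_0 = C_1/2$.

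The heart of the matter---and the step I expect to be the main obstacle---is this per-pair estimate, which is the two-site spectral averaging known as Minami's lemma. I would condition on all diagonal entries of $D$ other than $d_j$ and $d_k$, so that $B = A' + s\,e_je_j^\T + t\,e_ke_k^\T$ with $A'$ fixed and $s,t$ independent with density $\rho$. Writing $Q$ for the orthogonal projection onto $\spa{e_j,e_k}$ and $\Sigma = \mathrm{diag}(s,t)$ in the basis $(e_j,e_k)$, the rank-two Krein/Feshbach identity gives the $2\times2$ compressed resolvent $F(z) := Q(B-z)^{-1}Q = (M(z)+\Sigma)^{-1}$, where $M(z) := (Q(A'-z)^{-1}Q)^{-1}$ is independent of $(s,t)$; a short computation then yields $\det(\mathrm{Im}\,F(z)) = \det(\mathrm{Im}\,G_0(z))\,/\,\abs{\det(I + G_0(z)\Sigma)}^2$ with $G_0(z) := Q(A'-z)^{-1}Q$, so the whole $(s,t)$-dependence is confined to the explicit polynomial $\det(I + G_0(z)\Sigma)$, which is of degree one in each of $s$ and $t$. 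Via Stone's formula, $QP_{\mathcal{I}}(B)Q = \pi^{-1}\lim_{\eta\downarrow0}\int_{\mathcal{I}}\mathrm{Im}\,F(\lambda+\mathrm{i}\eta)\,\mathrm{d}\lambda$ and $P_{jj}P_{kk}-P_{jk}^2 = \det(QP_{\mathcal{I}}(B)Q)$. The delicate point is that $\det$ is not linear, so one cannot simply integrate $\det(\mathrm{Im}\,F)$ over $\mathcal{I}$; Minami's lemma supplies the correct pointwise-in-$(s,t)$ bound on $\det(QP_{\mathcal{I}}(B)Q)$ and the argument that its integral against $\rho(s)\rho(t)$ telescopes---through the rank-one and rank-two spectral-averaging identities, which say that averaging the relevant spectral measures against $\rho$, respectively $\rho\otimes\rho$, produces absolutely continuous measures with densities at most $\norm{\rho}_\infty$, respectively $\norm{\rho}_\infty^2$---to the factor $(\norm{\rho}_\infty\abs{\mathcal{I}})^2$. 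Reproving this lemma (or checking that the cited version applies verbatim in the finite-dimensional setting), and handling the $\eta\downarrow0$ boundary limits at the endpoints of $\mathcal{I}$, is the technical crux; the structural fact underlying it is that the eigenvalues of $B$ increase monotonically with $s$ (and with $t$), which makes the associated spectral-shift functions monotone and the averaging identities applicable.
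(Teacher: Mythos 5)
The paper does not prove this proposition; it presents it as a cited result, Equation~(1.11) of Aizenman et al.\ attributed to Minami, and uses it as a black box in the proof of \cref{thmgap}. So there is no in-paper argument to compare yours against.

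That said, your outline is a correct reconstruction of the standard Minami argument, and the reductions you carry out are all sound: the second-moment reduction $\Pr(N\ge 2)\le\tfrac12\mathbb{E}[N(N-1)]$; the scaling $(B_0,\mathcal{I})\mapsto(\epsilon^{-1}B_0,\epsilon^{-1}\mathcal{I})$ that eliminates $\epsilon$ without changing $\rho$; the identity $N(N-1)=\sum_{j\ne k}\bigl(P_{jj}P_{kk}-P_{jk}^2\bigr)$ with each summand a nonnegative $2\times 2$ principal minor of the PSD projection $P$; and the rank-two Feshbach identity $F(z)=(M(z)+\Sigma)^{-1}$ together with the formula $\det(\mathrm{Im}\,F)=\det(\mathrm{Im}\,G_0)/\abs{\det(I+G_0\Sigma)}^2$. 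However, as written this is not a proof. The per-pair bound $\mathbb{E}\bigl[\det(QP_{\mathcal{I}}(B)Q)\bigr]\le C_1(\norm{\rho}_\infty\abs{\mathcal{I}})^2$ is the entire mathematical content of the proposition---everything before it is elementary bookkeeping---and your write-up defers it, as you yourself acknowledge, to ``Minami's lemma'' without proving it or reducing it to a cleanly stated, checkable citation. You correctly identify why the naive step fails (the determinant is not linear, so one cannot simply integrate the resolvent identity over $\mathcal{I}$), but the two-site spectral-averaging argument that actually closes the gap is only gestured at. Given that the paper itself treats this proposition as quoted background, deferring to the literature is defensible; but then the honest move is to cite the two-site averaging lemma directly rather than sketch a derivation that is not carried through. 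As a blind reconstruction of the scaffolding around a cited black box, this is good work; as a self-contained proof it has a hole exactly where the difficulty lives.
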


\Cref{repulsion} is a probabilistic result: For an arbitrary but fixed interval $\mathcal{I}$, it is very unlikely that more than one eigenvalue ends up in that interval, provided that the interval is sufficiently small, $\abs{\mathcal{I}} \ll \epsilon /n$. In particular, for an eigenvalue $\lambda_i$ of $B_0 + \epsilon D$ it is very unlikely that $\lambda_{i-1}$ or $\lambda_{i+1}$ are very close to $\lambda_i$. \Cref{thmgap} below is a consequence of these considerations. In contrast, when $\abs{\mathcal{I}} \gtrsim  \epsilon /n$, the probability bound of \Cref{repulsion} may be larger than $1$ and no conclusion can be drawn.
\begin{theorem}
    \label{thmgap}
    Consider the matrix $B$ defined in~\eqref{eq:defB} and suppose that the diagonal entries of $D$ are drawn i.i.d. from the uniform distribution on $[0,1]$.
    If $0<\epsilon<\underline{\sigma}^{2}$, the inequality
    \begin{equation*}
        \min_{1\leq i\leq N-1}\abs{\lambda_{i+1}-\lambda_{i}} \geq \frac{\delta\epsilon}{Cn^{2}},
    \end{equation*}
    holds with probability at least $1-\delta$ for any $0<\delta<1$, where
$C$ is a universal constant.
\end{theorem}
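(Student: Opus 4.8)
The plan is to adapt the argument of \cite[Lemma 12]{Meyer2023}, but to take advantage of the fact that the perturbed zero eigenvalues are confined to a tiny interval. Since the diagonal entries of $D$ lie in $[0,1]$ we have $\|D\|\le 1$, so the one-sided Weyl bound already recorded in \eqref{eq:propevB} gives $0\le\lambda_1\le\dotsb\le\lambda_N\le\epsilon\|D\|\le\epsilon$; the hypothesis $\epsilon<\underline{\sigma}^{2}$ is what makes $\lambda_1,\dotsc,\lambda_N$ the perturbations of the $N$ zero eigenvalues, but the only quantitative input needed below is that these are eigenvalues of $B$ lying in $[0,\epsilon]$. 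If $N=1$ the claimed minimum is over the empty set and there is nothing to prove, so assume $N\ge 2$.

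Write $\eta:=\delta\epsilon/(Cn^{2})$ for the gap we aim to certify, with $C$ a universal constant to be fixed at the end, and note that $\eta<\epsilon$ once $C\ge 1$. Cover $[0,\epsilon]$ by the $K=\order(\epsilon/\eta)$ closed intervals $\mathcal{I}_k=[(k-1)\eta,(k+1)\eta]$, $k=1,\dotsc,K$, each of length $2\eta$ and overlapping the next one in an interval of length $\eta$. This overlap guarantees that any two points of $[0,\epsilon]$ at distance at most $\eta$ from each other lie in a common $\mathcal{I}_k$ (take $k=\lfloor x/\eta\rfloor+1$ for the smaller point $x$). Hence if $\min_{1\le i\le N-1}\abs{\lambda_{i+1}-\lambda_i}<\eta$, then two of $\lambda_1,\dotsc,\lambda_N$ land in a single $\mathcal{I}_k$, i.e., $B$ has at least two eigenvalues in that $\mathcal{I}_k$.

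It remains to bound the probability of this event by a union bound over $k$ together with \cref{repulsion}. The diagonal entries of $D$ are uniform on $[0,1]$, so $\|\rho\|_{\infty}=1$, and with $\abs{\mathcal{I}_k}=2\eta$ the proposition yields $\Pr(B\text{ has}\ge 2\text{ eigenvalues in }\mathcal{I}_k)\le C_0(2\eta n/\epsilon)^{2}$. Summing over the $K=\order(\epsilon/\eta)$ intervals,
\[
\Pr\!\Big(\min_{1\le i\le N-1}\abs{\lambda_{i+1}-\lambda_i}<\eta\Big)\le K\cdot C_0\Big(\frac{2\eta n}{\epsilon}\Big)^{2}=\order\!\Big(\frac{\eta n^{2}}{\epsilon}\Big).
\]
The point is that we only cover $[0,\epsilon]$ — not the whole spectrum, of length $\order(\|A\|^{2})$, as in \cite[Lemma 12]{Meyer2023} — so there are only $\order(\epsilon/\eta)$ intervals, and this factor cancels one of the two powers of $\eta/\epsilon$ supplied by \cref{repulsion}, leaving a bound that is \emph{linear} in $\eta$. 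This is precisely what lets one solve for $\eta$, and it is the source of the $\order(\epsilon)$ (rather than $\order(\epsilon^{2})$) gap and of the $\|A\|$-independence of the bound. Choosing $C$ large enough to absorb $C_0$ and the numerical constants from the covering so that the right-hand side is at most $\delta$ completes the proof.

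There is no genuinely difficult step; the argument is short. The one place to be careful is the covering: the intervals must overlap enough that a sub-$\eta$ gap is trapped inside an interval of length $\order(\eta)$ — not, say, of length $\order(\epsilon)$ — since $\abs{\mathcal{I}_k}$ enters \cref{repulsion} quadratically, and the whole improvement over \cite[Lemma 12]{Meyer2023} rests on simultaneously keeping $\abs{\mathcal{I}_k}=\order(\eta)$ and the number of intervals equal to $\order(\epsilon/\eta)$. Beyond that, one only has to dispatch the trivial edge cases ($N=1$, and the integer boundary case in the covering, handled by enlarging $K$ by one).
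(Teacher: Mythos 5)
Your proof is correct and follows essentially the same route as the paper: confine the perturbed zero eigenvalues to $[0,\epsilon]$ via Weyl, cover this short interval (rather than the whole spectrum, which is the key departure from \cite[Lemma 12]{Meyer2023}) with $\order(\epsilon/\eta)$ overlapping intervals of length $\order(\eta)$, apply \cref{repulsion} to each, and union-bound. The paper merely instantiates the covering a little more explicitly (intervals $[j\gamma,(j+2)\gamma]$ with $\epsilon/\gamma$ an integer and a clean choice $\gamma=\epsilon/\lfloor Cn^2/\delta\rfloor$), but the argument, including your remark on why the cover must keep $\abs{\mathcal{I}_k}=\order(\eta)$ while having only $\order(\epsilon/\eta)$ pieces, is the same.
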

\begin{proof}
Because of $\|D\| \le 1$, it follows from~\eqref{eq:propevB} that $\lambda_{1},\dotsc,\lambda_{N}$ are contained in the interval $[0,\epsilon]$.
We cover this interval with overlapping (smaller) intervals $\mathcal{I}_{j}=[j\gamma,(j+2)\gamma]$ for $j = 0,\ldots,\epsilon/\gamma-2$, where---for the moment---$\gamma$ is an arbitrary number such that $0<\gamma\leq \epsilon/2$ and $\epsilon/\gamma$ is an integer. 
If two neighboring eigenvalues satisfy $\lambda_{i+1}-\lambda_{i}\leq\gamma$, then there clearly exists an interval $\mathcal{I}_{j}$ such that $\lambda_{i},\lambda_{i+1}\in\mathcal{I}_{j}$. Thus, 
    \begin{eqnarray*}
        \Pr \{ \exists i\colon \lambda_{i+1}-\lambda_{i}\leq \gamma\} &\leq & 
        \Pr \{\exists i,\, j\colon \lambda_{i},\lambda_{i+1}\in\mathcal{I}_{j}\}\leq \sum_{j=0}^{\epsilon/\gamma-2}\Pr \{\exists i\colon \lambda_{i},\lambda_{i+1}\in\mathcal{I}_{j}\} \\
        &\le & \sum_{j=0}^{\epsilon/\gamma-2} 4 C_0 n^{2} \epsilon^{-2} \gamma^{2}= 4C_0 n^{2}\gamma \epsilon^{-1} \big(1-\gamma\epsilon^{-1} \big) \le \frac{4C_0 n^{2}}{\epsilon/\gamma+1},
    \end{eqnarray*}
    where we used the union bound and~\cref{repulsion} in the second and third inequalities, respectively. The last inequality uses $x(1-x) = x (1-x^2) / (1+x) \le x / (1+x)$ with $x = \gamma \epsilon^{-1}$.
    The proof is concluded by setting $C = 4C_0$, $\gamma = \epsilon/\lfloor \frac{Cn^{2}}{\delta}\rfloor\geq \frac{\delta\epsilon}{Cn^{2}}$ and using 
    ${\epsilon}/{\gamma} = \lfloor {Cn^{2}}/{\delta}\rfloor> {Cn^{2}}/{\delta}-1$.
\end{proof}

\subsection{Perturbation of null space residual}

We will bound the impact of perturbations on the null space residual of $A$ using the following eigenvalue perturbation result. 
\begin{lemma}
    \label{propEvalPert}
    Consider symmetric matrices $B_{0}$ and $B \in \R^{n\times n}$ with eigenvalues
    $0\leq \lambda_{1}^{(0)} \leq \dotsb\leq \lambda^{(0)}_{n}$
    and $0\leq \lambda_{1} \leq \dotsb\leq \lambda_{n}$, respectively.
    Let $V_{N}$ be an orthonormal basis for the invariant subspace of $B$ associated with $\lambda_{1}, \dotsc, \lambda_{N}$. Suppose that $B-B_{0}$ is positive semi-definite, then 
    \begin{equation*}
        \norm{V_{N}^{\Ttran}B_{0}V_{N}} \leq \lambda_{N}^{(0)}+ \norm{B-B_{0}}.
    \end{equation*}
    If, additionally, $\norm{B-B_{0}}<\lambda_{N+1}^{(0)}-\lambda_{N}^{(0)}$ then 
    \begin{equation*}
        \norm{V_{N}^{\Ttran}B_{0}V_{N}}\leq \lambda_{N}^{(0)}+\frac{\norm{B-B_{0}}^{2}}{\lambda_{N+1}^{(0)}-\lambda_{N}^{(0)}-\norm{B-B_{0}}}.
    \end{equation*}
\end{lemma}

\begin{proof}
    The first result is a direct consequence of Weyl's inequality:
    \begin{equation*}
        \norm{V_{N}^{\Ttran}B_{0}V_{N}} \leq \norm{V_{N}^{\Ttran}BV_{N}}
        =\lambda_N 
        \leq \lambda_{N}^{(0)}+\norm{B-B_{0}},
    \end{equation*}
    The second result follows from a quadratic residual bound for eigenvalues in~\cite[Theorem~1]{Mathias1998}. To apply this bound, we let $V_\perp$ be such that $[V_N,V_\perp]$ is an $n\times n$ orthogonal matrix and consider the gap
    \begin{equation}
        \label{eq:lowerbound}
        \begin{aligned}
            g &= \lambda_{\min}\big( V_\perp^{\Ttran} B_0 V_\perp \big) - \lambda_{N}^{(0)}
     \ge \lambda_{\min}\big( V_\perp^{\Ttran} B V_\perp \big) - \lambda_{N}^{(0)} - \|B-B_0\|   \\
     &=  \lambda_{N+1} - \lambda_{N}^{(0)} - \|B-B_0\| \ge 
     \lambda_{N+1}^{(0)} - \lambda_{N}^{(0)} -  \|B-B_0\| > 0.
        \end{aligned}
    \end{equation}
    Now, \cite[Theorem~1]{Mathias1998} yields
    \begin{equation*}
        \norm{V_{N}^{\Ttran}B_{0}V_{N}}-\lambda_{N}^{(0)}
        \leq g^{-1} \norm{(I-V_{N}V_{N}^{\Ttran})B_{0}V_{N}}^{2} \le
        g^{-1} \norm{B-B_{0}}^{2},
    \end{equation*}
    where the second inequality uses that $V_{N}$ spans an invariant subspace of $B$. Combined with~\cref{eq:lowerbound}, this proves the desired result.
\end{proof}

\cref{propEvalPert} provides two kinds of perturbation bounds. The first bound does not require a gap but depends linearly on $\norm{B-B_{0}}$, while the second one improves this to quadratic dependence at the expense of requiring a gap between $\lambda_{N}^{(0)}$ and $\lambda_{N+1}^{(0)}$. 
\begin{theorem}
    \label{thmVANg}
    Let $V_{N} \in \R^{n\times N}$ be an orthonormal basis of the invariant subspace associated with the $N$ smallest eigenvalues of $B=A^{\Ttran}A+\epsilon D$ for a symmetric positive semi-definite matrix $D$ with $\norm{D}\leq 1$. 
    \edit{If $0<\epsilon<\underline{\sigma}^{2}$ then
    \begin{equation*}
        \norm{AV_{N}}\leq \epsilon (\underline{\sigma}^{2}-\epsilon)^{-1/2}.
    \end{equation*}
    Furthermore, if $0<\epsilon<\underline{\sigma}^{2}/2$, then $\norm{AV_{N}}\leq \sqrt{\epsilon}$.}

\end{theorem}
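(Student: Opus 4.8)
The plan is to reduce the claim to a single application of \cref{propEvalPert} with the choice $B_{0}=A^{\Ttran}A$, followed by elementary estimates. The starting observation is that the spectral norm satisfies $\norm{M}^{2}=\norm{M^{\Ttran}M}$ for any matrix $M$; taking $M=AV_{N}$ gives
\[
	\norm{AV_{N}}^{2}=\norm{V_{N}^{\Ttran}A^{\Ttran}AV_{N}}=\norm{V_{N}^{\Ttran}B_{0}V_{N}},
\]
so it suffices to bound $\norm{V_{N}^{\Ttran}B_{0}V_{N}}$, which is exactly the quantity controlled by \cref{propEvalPert}.

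Next I would check the hypotheses of \cref{propEvalPert}. The perturbation $B-B_{0}=\epsilon D$ is positive semi-definite with $\norm{B-B_{0}}=\epsilon\norm{D}\le\epsilon$. Since $A$ has exactly $N$ zero singular values, the $N$ smallest eigenvalues of $B_{0}=A^{\Ttran}A$ vanish, so $\lambda_{N}^{(0)}=0$ and $\lambda_{N+1}^{(0)}=\underline{\sigma}^{2}$. The gap requirement $\norm{B-B_{0}}<\lambda_{N+1}^{(0)}-\lambda_{N}^{(0)}$ then reads $\epsilon\norm{D}\le\epsilon<\underline{\sigma}^{2}$, which holds because $\epsilon<\underline{\sigma}^{2}/2$. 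Hence the second (quadratic) bound of \cref{propEvalPert} applies and yields
\[
	\norm{V_{N}^{\Ttran}B_{0}V_{N}}\le 0+\frac{(\epsilon\norm{D})^{2}}{\underline{\sigma}^{2}-\epsilon\norm{D}}\le\frac{\epsilon^{2}}{\underline{\sigma}^{2}-\epsilon},
\]
where the last step uses $\norm{D}\le 1$ together with $\underline{\sigma}^{2}-\epsilon\norm{D}\ge\underline{\sigma}^{2}-\epsilon>0$. Taking square roots gives $\norm{AV_{N}}\le\epsilon(\underline{\sigma}^{2}-\epsilon)^{-1/2}$, and the final inequality $\epsilon(\underline{\sigma}^{2}-\epsilon)^{-1/2}\le\sqrt{\epsilon}$ is equivalent to $\epsilon\le\underline{\sigma}^{2}-\epsilon$, i.e.\ $2\epsilon\le\underline{\sigma}^{2}$, which is again guaranteed by the assumption.

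I do not expect a genuine obstacle here: the argument is a direct substitution into \cref{propEvalPert} plus routine algebra. The only points meriting a little care are the reduction $\norm{AV_{N}}^{2}=\norm{V_{N}^{\Ttran}A^{\Ttran}AV_{N}}$ and ensuring the factor $\norm{D}\le 1$ is used so that the final bound involves only $\epsilon$ and $\underline{\sigma}$, independently of $D$ itself. One could alternatively invoke the first (linear) bound of \cref{propEvalPert} to get the weaker estimate $\norm{AV_{N}}\le\sqrt{\epsilon}$ without any gap, but the quadratic bound is what delivers the sharper factor $\epsilon(\underline{\sigma}^{2}-\epsilon)^{-1/2}$ in the statement.
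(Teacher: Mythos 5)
Your argument is correct and matches the paper's proof, which simply notes that the result follows from inserting $B_0 = A^{\Ttran}A$ into the second bound of \cref{propEvalPert}; you have spelled out the details (the identity $\norm{AV_N}^2 = \norm{V_N^{\Ttran}A^{\Ttran}AV_N}$, the gap $\lambda_{N+1}^{(0)}-\lambda_N^{(0)}=\underline{\sigma}^2$, the monotonicity that lets you replace $\epsilon\norm{D}$ by $\epsilon$, and the final algebraic reduction of $\epsilon(\underline{\sigma}^2-\epsilon)^{-1/2}\le\sqrt{\epsilon}$ to $2\epsilon\le\underline{\sigma}^2$), but the underlying approach is identical.
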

\begin{proof}
The result essentially follows from inserting $B_0 = A^{\Ttran} A$ into the second bound of \cref{propEvalPert}.
\end{proof}

\subsection{Principal angles and null space residual}
To study the impact of perturbations on subspaces, we recall the definition and basic properties of principal angles. For later purposes, it will be helpful to admit subspaces of unequal dimension, as in~\cite[Section~6.4.3]{Golub2013} and \cite{Zhu2013}. 

\begin{definition}
    \label{defPA}
    Let the columns of $V_{0}\in\R^{n\times k_{0}}$ and $V_{1}\in\R^{n\times k_{1}}$ form orthonormal bases of subspaces $\mathcal{V}_{0}$ and 
    $\mathcal{V}_{1}$, respectively. Let $k = \min\{k_{0},k_{1}\}$ and let $1 \ge \sigma_{1} \geq  \sigma_{2} \geq \dotsb \geq \sigma_{k} \geq 0$ denote the singular values of $V_{0}^{\Ttran}V_{1}$.
    Then the principal angles $\angle_{t}(V_{0},V_{1}) \in [0,\pi/2]$, $t = 1,\dotsc,k$, between $\mathcal{V}_{0}$ and $\mathcal{V}_{1}$ are defined by
    \begin{equation*}
        \cos \angle_{t}(\mathcal{V}_{0},\mathcal{V}_{1}) = \sigma_{t}.   
    \end{equation*}
\end{definition}

For brevity, we will write $\angle_{t}(\Omega_{0},\Omega_{1})$ instead of $\angle_{t}(\mathcal{V}_{0},\mathcal{V}_{1})$ for arbitrary, not necessarily orthonormal, basis for $\mathcal{V}_{0}$ and $\mathcal{V}_{1}$.

Note that the principal angles in \cref{defPA} are sorted in increasing order, and it is often the largest principal angle $\angle_{k}(V_{0},V_{1})$ that is of interest. In particular, it is \redit{well known~\cite[Corollary II.4.6]{Stewart1990}} that
$\sin \angle_{k}(V_{0},V_{1}) = \norm{V_{0}V_{0}^{\Ttran}-V_{1}V_{1}^{\Ttran}}$
holds when $k_{0}=k_{1}$, implying that the orthogonal projectors are close for small $\angle_{k}(V_{0},V_{1})$.  
The other principal angles give partial information; knowing that some of them are small allows us to conclude that $\mathcal{V}_{0}$ and $\mathcal{V}_{1}$ nearly share a common subspace. In the context of null spaces, the following result establishes a residual bound in terms of principal angles.
\begin{proposition}
    \label{corWt}
    Consider the setting of \cref{defPA} and a  matrix $A\in\R^{m \times n}$. Then for every $t = 1,\dotsc, k$ there exists a matrix $W_{t}\in\R^{k_{1}\times t}$ with orthonormal columns such that 
    \begin{equation*}
        \norm{AV_{1}W_{t}} \leq \norm{AV_{0}}+\norm{A}\sin\angle_{t}(V_{0},V_{1}).
    \end{equation*}
\end{proposition}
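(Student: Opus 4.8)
The plan is to construct $W_t$ explicitly from the singular value decomposition of the small matrix $V_0^{\Ttran}V_1 \in \R^{k_0\times k_1}$ that encodes the principal angles. Write $V_0^{\Ttran}V_1 = P\Sigma R^{\Ttran}$ with orthogonal $P\in\R^{k_0\times k_0}$, $R\in\R^{k_1\times k_1}$ and $\Sigma$ carrying $\sigma_1\geq\dotsb\geq\sigma_k\geq 0$ on its main diagonal, so that $\cos\angle_i(V_0,V_1)=\sigma_i$ by \cref{defPA}. I would then take $W_t$ to be the first $t$ columns of $R$. These are orthonormal, so $W_t\in\R^{k_1\times t}$ is admissible; intuitively it selects the $t$ principal directions of $\mathcal{V}_1$ that are closest to $\mathcal{V}_0$, which is the right choice precisely because \cref{defPA} sorts the angles \emph{increasingly}, so the discarded directions carry angles no smaller than $\angle_t$.

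Next I would decompose $V_1W_t$ along the orthogonal projector onto $\mathcal{V}_0$,
\begin{equation*}
	V_1W_t = V_0 M_t + Z_t, \qquad M_t := V_0^{\Ttran}V_1W_t, \quad Z_t := (I-V_0V_0^{\Ttran})V_1W_t,
\end{equation*}
and control the two pieces separately. Since $R^{\Ttran}W_t$ consists of the first $t$ columns of $I_{k_1}$, the SVD gives $M_t = P_t\Sigma_t$, where $P_t$ collects the first $t$ columns of $P$ and $\Sigma_t=\diagM{\sigma_1,\dotsc,\sigma_t}$; hence $\norm{M_t}=\sigma_1\leq 1$. For the orthogonal part, $Z_t^{\Ttran}Z_t = W_t^{\Ttran}V_1^{\Ttran}V_1W_t - M_t^{\Ttran}M_t = I_t - \Sigma_t^2 = \diagM{\sin^2\angle_1(V_0,V_1),\dotsc,\sin^2\angle_t(V_0,V_1)}$, so $\norm{Z_t} = \sin\angle_t(V_0,V_1)$, the largest of these sines because of the increasing ordering.

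The bound then follows from the triangle inequality and submultiplicativity of the spectral norm:
\begin{equation*}
	\norm{AV_1W_t} \leq \norm{(AV_0)M_t} + \norm{AZ_t} \leq \norm{AV_0}\,\norm{M_t} + \norm{A}\,\norm{Z_t} \leq \norm{AV_0}+\norm{A}\sin\angle_t(V_0,V_1).
\end{equation*}
I do not expect a genuine obstacle here: the only real idea is choosing $W_t$ as the leading right singular vectors of $V_0^{\Ttran}V_1$, after which the argument is routine linear-algebra bookkeeping. The points that need a little care are the ordering convention just mentioned and the fact that $k_0$ and $k_1$ may be unequal, which affects only the shapes of $P$, $R$, $\Sigma$ and not the reasoning.
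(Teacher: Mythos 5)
Your proof is correct and takes essentially the same route as the paper: choose $W_t$ as the leading $t$ right singular vectors of $V_0^{\Ttran}V_1$, split $V_1W_t$ via the projector onto $\mathcal{V}_0$, bound the two pieces, and conclude by the triangle inequality. The only cosmetic difference is that you compute $Z_t^{\Ttran}Z_t = I_t-\Sigma_t^2$ explicitly from the SVD, whereas the paper evaluates $\norm{(I-V_0V_0^{\Ttran})V_1W_t}$ directly via a variational argument; both yield $\sin\angle_t(V_0,V_1)$.
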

\begin{proof}
    \edit{Let $W_{t}$ contain the right singular vectors corresponding to the smallest $t$ singular values of $(I-V_{0}V_{0}^{\Ttran})V_{1}$. By \cite[Chap.~1, Thm.~4.37]{Stewart1998}, we know that 
    \begin{equation*}
        \sin\angle_{t}(V_{0},V_{1}) = \norm{(I-V_{0}V_{0}^{\Ttran})V_{1}W_{t}}.
    \end{equation*}
    }
    The proof is concluded by the triangular inequality:
    \begin{equation*}
        \begin{aligned}
            \norm{AV_{1}W_{t}} &\leq \norm{AV_{0}V_{0}^{\Ttran}V_{1}W_{t}}+\norm{A(I-V_{0}V_{0}^{\Ttran})V_{1}W_{t}}\\ 
            &\leq \norm{AV_{0}}+\norm{A}\sin\angle_{t}(V_{0},V_{1}).
        \end{aligned}
    \end{equation*}
\end{proof}
We conclude this section with a useful characterization of the tangents of principal angles from \cite[Theorem~3.1]{Zhu2013}.

\begin{proposition}
    \label{proptan} 
    Let $[V_{0},V_{\perp}]$ be an orthonormal matrix, where $V_{0}\in\R^{n\times k_{0}}$. Suppose $\Omega_{1}\in\R^{n\times k_{1}}$ has column full rank. If $\rk(V_{0}^{\Ttran}\Omega_{1})=\rk(\Omega_{1})\leq k_{0}$, then 
    \begin{equation*}    
    \norm{V_{\perp}^{\Ttran}\Omega_{1}(V_{0}^{\Ttran}\Omega_{1})^{\dagger}} = \tan\angle_{k_{1}}(V_{0},\Omega_{1}).
    \end{equation*}
\end{proposition}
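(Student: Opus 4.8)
The plan is to reduce the identity to the standard CS-decomposition-type argument for two subspaces, and to exploit the rank hypothesis to control the pseudoinverse. First I would bring in an orthonormal basis $V_1$ of the column space of $\Omega_1$ so that $\Omega_1 = V_1 R$ for some square invertible $R \in \R^{k_1 \times k_1}$ (this uses that $\Omega_1$ has full column rank). Then $V_{\perp}^{\Ttran}\Omega_1 (V_0^{\Ttran}\Omega_1)^{\dagger}$ should simplify: since $\rk(V_0^{\Ttran}\Omega_1) = \rk(\Omega_1) = k_1$ and $R$ is invertible, $(V_0^{\Ttran}\Omega_1)^{\dagger} = (V_0^{\Ttran} V_1 R)^{\dagger} = R^{-1} (V_0^{\Ttran} V_1)^{\dagger}$, so the product becomes $V_{\perp}^{\Ttran} V_1 (V_0^{\Ttran} V_1)^{\dagger}$. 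The factor $R$ has cancelled, which is exactly why the statement is invariant under change of basis for $\Omega_1$ and why we are allowed to assume $\Omega_1 = V_1$ orthonormal from the start.

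Next I would invoke the (thin) SVD of $V_0^{\Ttran} V_1 \in \R^{k_0 \times k_1}$, say $V_0^{\Ttran} V_1 = P \Sigma Q^{\Ttran}$ with $\Sigma = \diagM{\sigma_1,\dotsc,\sigma_{k_1}}$ (here $k = \min\{k_0,k_1\} = k_1$ by the rank hypothesis $k_1 \le k_0$), where by \cref{defPA} the $\sigma_t = \cos\angle_t(V_0,V_1)$. The rank condition $\rk(V_0^{\Ttran}V_1) = k_1$ forces $\sigma_{k_1} > 0$, so $\Sigma$ is invertible and $(V_0^{\Ttran}V_1)^{\dagger} = Q \Sigma^{-1} P^{\Ttran}$. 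I would also use the CS-type relation $\norm{V_{\perp}^{\Ttran} V_1 q} = \sin\angle(V_0, V_1 q)$ for unit vectors $q$; more concretely, $V_1^{\Ttran}(V_0 V_0^{\Ttran} + V_{\perp}V_{\perp}^{\Ttran})V_1 = I$ gives $V_1^{\Ttran}V_{\perp}V_{\perp}^{\Ttran}V_1 = I - V_1^{\Ttran}V_0 V_0^{\Ttran}V_1 = Q(I - \Sigma^{\Ttran}\Sigma)Q^{\Ttran}$. Therefore
\begin{equation*}
	\big( V_{\perp}^{\Ttran}\Omega_1(V_0^{\Ttran}\Omega_1)^{\dagger}\big)^{\Ttran}\big( V_{\perp}^{\Ttran}\Omega_1(V_0^{\Ttran}\Omega_1)^{\dagger}\big)
	= \Sigma^{-1}P^{\Ttran} Q^{\Ttran} \cdot Q(I - \Sigma^{\Ttran}\Sigma)Q^{\Ttran} \cdot Q P \Sigma^{-1}
	= \Sigma^{-1}(I - \Sigma^{2})\Sigma^{-1},
\end{equation*}
a diagonal matrix with entries $(1-\sigma_t^2)/\sigma_t^2 = \tan^2\angle_t(V_0,V_1)$; taking the square root of the largest entry yields $\norm{V_{\perp}^{\Ttran}\Omega_1(V_0^{\Ttran}\Omega_1)^{\dagger}} = \max_t \tan\angle_t(V_0,V_1) = \tan\angle_{k_1}(V_0,V_1)$, the last equality because the principal angles are sorted increasingly.

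The only real subtlety — and the step I would be most careful about — is the manipulation of the pseudoinverse: the identity $(V_0^{\Ttran}V_1 R)^{\dagger} = R^{-1}(V_0^{\Ttran}V_1)^{\dagger}$ is \emph{not} true for general matrices, and one must check it holds here. It does, precisely because $R$ is square and invertible, so $(MR)^{\dagger} = R^{-1}M^{\dagger}$ whenever $R$ is invertible; alternatively, and perhaps cleaner for the writeup, one avoids this entirely by verifying the four Moore--Penrose conditions directly for the claimed expression, or simply observes that $V_{\perp}^{\Ttran}\Omega_1(V_0^{\Ttran}\Omega_1)^{\dagger} = V_{\perp}^{\Ttran}\Omega_1 X$ where $X$ is the minimum-norm solution of $(V_0^{\Ttran}\Omega_1) X = I$ and that this product is unchanged if one first reduces $\Omega_1$ to orthonormal columns. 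A secondary point is making sure the rank hypothesis is used in exactly the right place: it guarantees both that $k_1 = k$ (so that "the" largest principal angle is $\angle_{k_1}$) and that $\sigma_{k_1} > 0$ (so that no $\tan$ is infinite and $\Sigma^{-1}$ exists). With those two observations in hand the computation above is routine.
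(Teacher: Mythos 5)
The paper does not actually prove this proposition --- it is cited to Zhu and Knyazev~\cite[Theorem~3.1]{Zhu2013} and used as a black box. Your argument is, in outline, the standard CS-decomposition proof of that result (reduce to orthonormal $V_1$, take the SVD of $V_0^\T V_1$, use $V_1^\T V_\perp V_\perp^\T V_1 = I - V_1^\T V_0 V_0^\T V_1$), and the conclusion you reach is correct. Two points need fixing, however.

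First, and most importantly, the justification you give for the pseudoinverse step is wrong as stated. You write that $(MR)^\dagger = R^{-1}M^\dagger$ ``whenever $R$ is invertible.'' That is false: take $M = \diagm(1,0)$ and $R = \begin{bmatrix}1&1\\0&1\end{bmatrix}$; then $(MR)^\dagger = \tfrac12\begin{bmatrix}1&0\\1&0\end{bmatrix}$ but $R^{-1}M^\dagger = \diagm(1,0)$. Invertibility of $R$ alone is not sufficient. What saves the argument here is precisely the rank hypothesis: $M = V_0^\T V_1$ has full \emph{column} rank $k_1$ (because $\rk(V_0^\T\Omega_1)=k_1$ and $R$ is invertible), and $R$ has full row rank. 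It is a classical fact that $(AB)^\dagger = B^\dagger A^\dagger$ holds when $A$ has full column rank and $B$ has full row rank, which gives $(MR)^\dagger = R^\dagger M^\dagger = R^{-1}M^\dagger$ in this situation. You should replace the ``whenever $R$ is invertible'' justification with this one --- otherwise a reader will rightly object, since the claim as written has a two-by-two counterexample.

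Second, a minor slip: in the displayed computation of $\bigl(V_\perp^\T\Omega_1(V_0^\T\Omega_1)^\dagger\bigr)^\T\bigl(V_\perp^\T\Omega_1(V_0^\T\Omega_1)^\dagger\bigr)$ the left and right factors should read $P\Sigma^{-1}Q^\T$ and $Q\Sigma^{-1}P^\T$ (coming from $(V_0^\T V_1)^\dagger = Q\Sigma^{-1}P^\T$ and its transpose), not $\Sigma^{-1}P^\T Q^\T$ and $QP\Sigma^{-1}$, which are dimensionally inconsistent. The product then equals $P\Sigma^{-1}(I-\Sigma^2)\Sigma^{-1}P^\T$, and since $P$ has orthonormal columns its spectral norm is $\max_t(1-\sigma_t^2)/\sigma_t^2 = \tan^2\angle_{k_1}(V_0,V_1)$, as you conclude. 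With these two corrections the proof is complete and correct.
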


\section{Convergence analysis}
\label{Sec:ConvAnal}

In this section, we study the convergence of \cref{algo}, focusing on the case of a single random starting vector $\omega$, that is, $d = 1$.

\subsection{Convergence of single-vector Krylov subspaces for small eigenvalues}

In the following, we analyze how well the Krylov subspace generated in line~3 of \cref{algo} contains approximations to (portions of) the invariant subspace associated with the $N$ smallest eigenvalues of $B$. The analysis mimics analogous results obtained in~\cite{Meyer2023} for low-rank approximation. Compared to classical results~\cite[Chapter VI]{Saad2011}, our analysis differs by considering the convergence to a whole subspace and leveraging the randomness of the initial vector. 
In the following theorem, the main result of this section; $\cheb_{k}(\lambda)$ denotes the Chebyshev polynomial of degree $k$ defined as 
\begin{equation}
    \label{defCheb}
    \cheb_{k}(\lambda) = \frac{1}{2}\big(\big(\lambda+\sqrt{\lambda^{2}-1}\big)^{k}+\big(\lambda+\sqrt{\lambda^{2}-1}\big)^{-k}\big).
\end{equation}

\begin{theorem}
    \label{thmLan}
    For a symmetric matrix $B\in\R^{n\times n}$ with eigenvalues $\lambda_{1}<\lambda_{2}<\dotsb<\lambda_{N}<\lambda_{N+1}\leq \dotsb\leq \lambda_{n}$, let $V_{N} \in \R^{n\times N}$ be an orthonormal basis of the invariant subspace associated with $\lambda_1,\dotsc,\lambda_N$. Choose integers $t,\ell$ such that $1\leq t\leq N$ and $t\leq \ell\leq n$.
    Let $K_{\ell}=[\omega,B\omega,\dotsc,B^{\ell-1}\omega]$, where $\omega\in\R^{n}$ is a Gaussian random vector. Then, for any $0<\delta<1$, the inequality
    \begin{equation}
        \label{eqLan}
        \tan\angle_{t}(V_{N},K_{\ell}) \leq 
        \frac{N\sqrt{5\pi (n-N)\log(n/\delta)}}{\sqrt{2}\delta(t-1)!\cdot\cheb_{\ell-t}\bigl(1+2\frac{\lambda_{N+1}-\lambda_{N}}{\lambda_{n}-\lambda_{N+1}}\bigr)}\Bigl(\frac{2}{\gap}\Bigr)^{t-1}
    \end{equation}
    holds with probability at least $1-2\delta$,
    where
    \begin{equation}
        \label{defgap}
        \gap := \min_{1\leq i\leq N-1}( \lambda_{i+1}-\lambda_{i} ) / (\lambda_{n}-\lambda_{1}).
    \end{equation}   
\end{theorem}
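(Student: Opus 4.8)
The plan is to bound $\tan\angle_{t}(V_N,K_\ell)$ by exhibiting a polynomial $p$ of degree $\le \ell-1$ such that $p(B)\omega$ lies close (in the sense of small principal angles with respect to the orthogonal complement) to the span of the first $t$ eigenvectors. The natural candidate is a product of two factors: a \emph{filtering} factor, built from a shifted-and-scaled Chebyshev polynomial of degree $\ell-t$ that is large on $[\lambda_1,\lambda_{N+1})$ and small on $[\lambda_{N+1},\lambda_n]$, so that it suppresses the unwanted invariant subspace; and an \emph{interpolation} factor of degree $t-1$ that vanishes at $\lambda_{t+1},\dotsc,\lambda_N$, so that $p(B)\omega$ is forced into $\spa{v_1,\dotsc,v_t}$ modulo the tail. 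Concretely, I would take
\[
p(\lambda) \;=\; \prod_{j=t+1}^{N}\frac{\lambda-\lambda_j}{\lambda_t-\lambda_j}\;\cdot\;\frac{\cheb_{\ell-t}\!\bigl(1+2\tfrac{\lambda_{N+1}-\lambda}{\lambda_n-\lambda_{N+1}}\bigr)}{\cheb_{\ell-t}\!\bigl(1+2\tfrac{\lambda_{N+1}-\lambda_N}{\lambda_n-\lambda_{N+1}}\bigr)},
\]
normalized so that $|p(\lambda_t)|$ is of order $1$ (in fact $p(\lambda_t)=1$), and then use \cref{proptan} with $V_0 = V_N$ (more precisely its first $t$ columns after an orthogonal change of basis diagonalizing $B$ on the invariant subspace) and $\Omega_1 = K_\ell$, noting that $p(B)\omega$ lies in the column space of $K_\ell$.

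Expanding $\omega = \sum_{i=1}^n c_i v_i$ in the eigenbasis of $B$, with $c_i$ i.i.d.\ standard Gaussian, the quantity $\tan\angle_t$ is controlled by the ratio of the tail mass $\sum_{i>N}|c_i\,p(\lambda_i)|^2$ (bounded via the Chebyshev factor and the trivial bound $\le 1$ on the interpolation factor for $\lambda_i\ge\lambda_{N+1}$, which needs a small monotonicity check) to the head contribution coming from the $t$-th coordinate, which is essentially $|c_t\,p(\lambda_t)|^2 = |c_t|^2$. The $\bigl(2/\gap\bigr)^{t-1}$ term and the $1/(t-1)!$ come from bounding $\prod_{j=t+1}^N |\lambda_i-\lambda_j|/|\lambda_t-\lambda_j|$: on $\lambda_i \ge \lambda_{N+1}$ the numerator is at most $(\lambda_n-\lambda_1)^{t-1}$ wait—more carefully, at most $(\lambda_n-\lambda_1)^{N-t}$, while the denominator is at least $\bigl(\tfrac{\gap}{2}(\lambda_n-\lambda_1)\bigr)^{N-t}(N-t)!$ roughly, using \eqref{defgap} to lower-bound consecutive gaps and the ordering of the $\lambda_j$'s; here one must be slightly careful since the exponent should come out as $t-1$, so the counting of factors and which index plays the role of the "free" one has to be organized so that the interpolation polynomial has degree exactly $t-1$ and the product telescopes correctly against the $\gap$ bound. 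The factorials and the $\sqrt{N}$, $\sqrt{n-N}$ factors are then assembled from standard Gaussian tail bounds: with probability $\ge 1-\delta$ one has $|c_t| \ge$ (something like) $\delta/\sqrt{2\pi}$, i.e.\ an anti-concentration lower bound on a single Gaussian coordinate, and with probability $\ge 1-\delta$ one has $\sum_{i>N}|c_i|^2 \le$ roughly $(n-N) + \sqrt{(n-N)\log(1/\delta)}$ by a chi-squared tail bound—though the precise form $\sqrt{5\pi(n-N)\log(n/\delta)}$ suggests instead controlling $\max_i |c_i| \le \sqrt{2\log(n/\delta)}$ uniformly and then using $\|\omega\|$ or a crude $\sqrt{n-N}$ count, so I would use a union bound over the $n$ coordinates for the upper tail and keep a single coordinate for the lower tail, collecting the two failure probabilities into the stated $2\delta$.

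The main obstacle I anticipate is \emph{not} the polynomial construction, which is classical, but rather the bookkeeping needed to get the dependence on $t$ exactly right—in particular making the interpolation factor have degree precisely $t-1$ (so the Chebyshev factor can have degree $\ell-t$ and fit within the Krylov subspace of dimension $\ell$), ensuring $p(\lambda_i)$ for $\lambda_i$ in the tail is bounded by the Chebyshev term alone without the interpolation factor blowing up, and matching the constant $\sqrt{5\pi/2}$ and the $1/(t-1)!$ to the Gaussian concentration constants. A secondary subtlety is verifying the rank hypothesis of \cref{proptan}, namely that $\rk\bigl((V_N)_{:,1:t}^\T K_\ell\bigr) = \rk(K_\ell\ \text{restricted appropriately}) = t$; this holds almost surely because $\omega$ is Gaussian and the eigenvalues $\lambda_1,\dotsc,\lambda_N$ are distinct (so the relevant Vandermonde-type matrix is nonsingular), and I would fold this genericity into the probabilistic statement. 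Once these are in place, combining \cref{proptan}, the explicit $p$, and the two Gaussian bounds via a union bound yields \eqref{eqLan} with probability at least $1-2\delta$.
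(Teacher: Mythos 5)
Your overall plan---kill the tail with a degree-$(\ell-t)$ Chebyshev filter, control the head with a degree-$(t-1)$ factor that exploits \cref{defgap}, combine via \cref{proptan}, and finish with Gaussian concentration for the tail coordinates plus anti-concentration for the head coordinates---is the right outline, and this is indeed how the paper proceeds. However, there are two genuine gaps in your construction that prevent it from working as written.

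The first and more fundamental one is that a single polynomial $p$ applied to $\omega$ produces a single vector $p(B)\omega$, and a single vector cannot control $\tan\angle_{t}(V_{N},K_{\ell})$ for $t>1$. By \cref{defPA}, $\angle_{t}$ is the $t$-th smallest principal angle; bounding it requires exhibiting a \emph{$t$-dimensional} subspace of $\spa{K_{\ell}}$ that is close to $\spa{V_{N}}$. Your ratio of ``tail mass to head contribution at $\lambda_{t}$'' is really a bound on the angle between the one-dimensional subspace $\spa{p(B)\omega}$ and $\spa{V_{N}}$, i.e.\ on $\tan\angle_{1}$, not $\tan\angle_{t}$. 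Concretely, the hypothesis of \cref{proptan} requires $\rk(\Omega_{1})\le k_{0}$; with $\Omega_{1}=K_{\ell}$ and $V_{0}$ the first $t$ columns of $V_{N}$ you have $\rk(\Omega_{1})=\ell>t=k_{0}$ whenever $\ell>t$, so the proposition simply does not apply in the form you invoke it. The paper resolves this by working with the naturally $t$-dimensional matrix $K_{t}=[\omega,B\omega,\dotsc,B^{t-1}\omega]$, observing $V_{N}^{\Ttran}K_{t}=\diagM{V_{N}^{\Ttran}\omega}\van_{N}$ with $\van_{N}$ an $N\times t$ Vandermonde matrix (\cref{lemVan}), and then using the block-Krylov identity $\spa{K_{\ell}}=\spa{K_{t},BK_{t},\dotsc,B^{\ell-t}K_{t}}$ together with a block multiplier $H_{K}$ (\cref{lemblk}) to replace $K_{t}$ by the $t$-dimensional subspace $K_{t}H_{K}\subset\spa{K_{\ell}}$ for the Chebyshev acceleration. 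The Lagrange polynomials that produce the $(2/\gap)^{t-1}/(t-1)!$ factor appear only inside the proof of \cref{lemVan} as a basis for $\mathcal P_{t-1}$ when bounding the operator norm $\norm{\van_{\perp}\van_{N}^{\dagger}}$; they are degree $t-1$ and vanish at $t-1$ of the $t$ nodes $\lambda_{1},\dotsc,\lambda_{t}$, which is quite different from your degree-$(N-t)$ factor vanishing at $\lambda_{t+1},\dotsc,\lambda_{N}$.

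The second gap is the degree count you already flagged but did not resolve: your $\prod_{j=t+1}^{N}(\lambda-\lambda_{j})/(\lambda_{t}-\lambda_{j})$ has degree $N-t$, so the full $p$ has degree $(N-t)+(\ell-t)=N+\ell-2t$, which exceeds $\ell-1$ whenever $N>2t-1$. Even for $t=1$ this is degree $N+\ell-2>\ell-1$ as soon as $N\ge2$, so $p(B)\omega\notin\spa{K_{\ell}}$ and the construction does not fit inside the Krylov subspace. There is no reorganization of ``which index plays the free role'' that fixes this while keeping the factor vanishing at all of $\lambda_{t+1},\dotsc,\lambda_{N}$; the fix is to drop that factor entirely and adopt the Vandermonde/Lagrange viewpoint of \cref{lemVan}.
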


\noindent Combining \cref{thmLan} with \cref{thmgap}, we obtain the following result.
\begin{corollary}
    \label{corLan}
    Suppose that the matrix $B$ in \cref{thmLan} takes the form $B=A^{\Ttran}A+\epsilon D$, where $A$ is an $m\times n$ matrix with null space dimension $N$ and $D$ is a random diagonal matrix with the diagonal entries drawn i.i.d. from the uniform distribution on $[0,1]$. Let $\underline{\sigma}$ and $\overline{\sigma}$ be the smallest and largest nonzero singular values of $A$, respectively. 
    If $0<\epsilon< \underline{\sigma}^{2}$, then 
    \begin{equation}
        \label{coreqtan}
        \tan\angle_{t}(V_{N},K_{\ell}) \leq \frac{N\sqrt{(n-N)\log(n/\delta)}\bigl(C(n^{2}\overline{\sigma}^{2})/(\delta\epsilon)\bigr)^{t-1}}
        {\delta\cdot(t-1)!\cdot\cheb_{\ell-t}\bigl(1+\frac{2(\underline{\sigma}^{2}-\epsilon)}{\overline{\sigma}^{2}-\underline{\sigma}^{2}+\epsilon}\bigr)}
    \end{equation}
    holds with probability at least $1-3\delta$, where $C$ is a universal constant.
\end{corollary}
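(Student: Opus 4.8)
The plan is to feed the structural information about $B = A^{\Ttran}A + \epsilon D$ into the abstract estimate~\eqref{eqLan} of~\cref{thmLan}, using~\cref{thmgap} to lower-bound the relative gap $\gap$ and Weyl's inequality to localize the remaining eigenvalues. First I would record the eigenvalue bounds implied by the perturbation being small and positive semi-definite: since the diagonal entries of $D$ lie in $[0,1]$ we have $\norm{D}\le 1$, so $\epsilon\norm{D}\le\epsilon<\underline{\sigma}^{2}$ and~\eqref{eq:propevB} applies, giving $0\le\lambda_{1}\le\dotsb\le\lambda_{N}\le\epsilon$ together with $\lambda_{N+1}\ge\underline{\sigma}^{2}$; the top of Weyl's inequality additionally gives $\lambda_{n}\le\overline{\sigma}^{2}+\epsilon$, and since $0<\epsilon<\underline{\sigma}^{2}\le\overline{\sigma}^{2}$ this is at most $2\overline{\sigma}^{2}$ (here $\overline{\sigma}=\norm{A}$ as $A\ne 0$).

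Next I would bring in the two independent sources of randomness. By~\cref{thmgap}, with probability at least $1-\delta$ over the draw of $D$ we have $\min_{1\le i\le N-1}(\lambda_{i+1}-\lambda_{i})\ge\delta\epsilon/(Cn^{2})>0$; on this event the eigenvalues $\lambda_{1},\dotsc,\lambda_{N}$ are in particular distinct, so $B$ satisfies the hypotheses of~\cref{thmLan}. Conditioning on $D$ and applying~\cref{thmLan} over the randomness of the Gaussian vector $\omega$ alone, inequality~\eqref{eqLan} then holds with conditional probability at least $1-2\delta$. Combining the two events (they concern the independent quantities $D$ and $\omega$) gives that the bound to be derived holds with probability at least $(1-\delta)(1-2\delta)\ge 1-3\delta$, as claimed.

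It remains to bound the right-hand side of~\eqref{eqLan} by that of~\eqref{coreqtan} on this event. For the Chebyshev factor, the localization bounds yield $\frac{\lambda_{N+1}-\lambda_{N}}{\lambda_{n}-\lambda_{N+1}}\ge\frac{\underline{\sigma}^{2}-\epsilon}{\overline{\sigma}^{2}-\underline{\sigma}^{2}+\epsilon}$ (both sides positive because $\epsilon<\underline{\sigma}^{2}$), and since $\cheb_{\ell-t}$ is nondecreasing on $[1,\infty)$ this lets me replace $\cheb_{\ell-t}\bigl(1+2\tfrac{\lambda_{N+1}-\lambda_{N}}{\lambda_{n}-\lambda_{N+1}}\bigr)$ in the denominator by the no-larger quantity $\cheb_{\ell-t}\bigl(1+\tfrac{2(\underline{\sigma}^{2}-\epsilon)}{\overline{\sigma}^{2}-\underline{\sigma}^{2}+\epsilon}\bigr)$. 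For the gap, dividing the bound of~\cref{thmgap} by $\lambda_{n}-\lambda_{1}\le\lambda_{n}\le 2\overline{\sigma}^{2}$ gives $\gap\ge\delta\epsilon/(2Cn^{2}\overline{\sigma}^{2})$, hence $(2/\gap)^{t-1}\le\bigl(4Cn^{2}\overline{\sigma}^{2}/(\delta\epsilon)\bigr)^{t-1}$. Substituting both estimates into~\eqref{eqLan} and folding the remaining numerical factors (the $\sqrt{5\pi/2}$ coming from~\cref{thmLan} and the $4C$ from the gap estimate) into a single universal constant $C$ produces~\eqref{coreqtan}.

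I do not expect a genuine obstacle here: all the analytic work is already done in~\cref{thmLan,thmgap}, and what remains is substitution and monotonicity. The one point that requires care is the order of operations with the randomness — \cref{thmLan} presumes strictly distinct eigenvalues $\lambda_{1}<\dotsb<\lambda_{N}$, so the good event of~\cref{thmgap} must be established first (it is precisely what makes the distinctness, and the quantitative gap, available), and only then may~\cref{thmLan} be invoked conditionally on $D$; the probability bookkeeping then turns ``$1-\delta$ unconditionally'' and ``$1-2\delta$ conditionally'' into the stated ``$1-3\delta$''. The subsequent constant-chasing is routine.
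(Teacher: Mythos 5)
Your proposal matches the paper's intended proof, which the paper itself states only as ``Combining \cref{thmLan} with \cref{thmgap}'': you invoke \cref{thmgap} to obtain the $\order(\epsilon)$ spacing of the perturbed null eigenvalues with probability $1-\delta$ over $D$, localize $\lambda_N\le\epsilon$, $\lambda_{N+1}\ge\underline{\sigma}^{2}$, and $\lambda_n\le\overline{\sigma}^{2}+\epsilon\le 2\overline{\sigma}^{2}$ via Weyl, then condition on $D$ and apply \cref{thmLan} over $\omega$, finishing with a union (or product) bound to reach $1-3\delta$ and the monotonicity of $\cheb_{\ell-t}$ on $[1,\infty)$ to pass to the weaker Chebyshev argument. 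The steps, including $\gap\ge\delta\epsilon/(2Cn^{2}\overline{\sigma}^{2})$ and the ordering of the two sources of randomness (establish the gap/distinctness event first, only then invoke \cref{thmLan}), are exactly what the corollary implicitly requires.

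One small wrinkle worth noting, though it lies in the paper's statement rather than in your argument: the factor $\sqrt{5\pi/2}$ from \cref{thmLan} and the $4$ from $\gap^{-1}\le 2Cn^{2}\overline{\sigma}^{2}/(\delta\epsilon)$ sit \emph{outside} the $(t-1)$-power, so they cannot literally be folded into a constant appearing as $C^{\,t-1}$ when $t=1$ (where $C^{0}=1$). Your remark about ``folding the remaining numerical factors into $C$'' works for $t\ge 2$; for $t=1$ the stated inequality \cref{coreqtan} is a constant factor ($\approx\sqrt{5\pi/2}$) tighter than what \cref{thmLan} delivers, which appears to be a minor imprecision in \cref{corLan} as written rather than a gap in your reasoning.
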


The rest of this section is concerned with the proof of \cref{thmLan}, which follows the ideas from~\cite{Meyer2023} and is divided into two parts. First, in \cref{thmLan1}, we will establish \cref{eqLan} for $\ell=t$, that is,
\begin{equation}
    \label{eqLan1}
    \tan\angle_{t}(V_{N},K_{t}) \leq \frac{N\sqrt{5\pi (n-N)\log(n/\delta)}}{\sqrt{2}\delta(t-1)!}\Bigl(\frac{2}{\gap}\Bigr)^{t-1}.
\end{equation}
Second, in \cref{thmLan2}, we prove 
\begin{equation}
    \label{eqLan2}
    \tan\angle_{t}(V_{N},K_{\ell})\leq \frac{\tan\angle_{t}(V_{N},K_{t})}{\cheb_{\ell-t}\big(1+2\frac{\lambda_{N+1}-\lambda_{N}}{\lambda_{n}-\lambda_{N+1}}\big)}.
\end{equation}
Plugging \cref{eqLan1} into \cref{eqLan2} then proves the inequality~\cref{eqLan} of \cref{thmLan}.

\subsubsection{The case $\ell = t$. Proof of~\cref{eqLan1}}

\label{thmLan1}
Following~\cite[Theorem~3]{Meyer2023}, the ability of a (single-vector) Krylov subspace to capture a subspace is closely related to properties of Vandermonde matrices.
\begin{lemma}
    \label{lemVan}
    For $\lambda_1 < \dotsb < \lambda_N \le \lambda_{N+1} \le \dotsb \le \lambda_n$ and     $1\leq t\leq N$, consider the 
    Vandermonde matrices 
    \begin{equation}
        \label{defVan}
        \van_{N} = \begin{bmatrix}
            1 & \lambda_{1} & \ldots & \lambda_{1}^{t-1}\\ 
            1 & \lambda_{2} & \ldots & \lambda_{2}^{t-1}\\ 
            \vdots & \vdots & \ddots & \vdots\\
            1 & \lambda_{N} & \ldots & \lambda_{N}^{t-1} 
        \end{bmatrix}, \quad 
        \van_{\perp} = \begin{bmatrix}
            1 & \lambda_{N+1} & \ldots & \lambda_{N+1}^{t-1}\\ 
            1 & \lambda_{N+2} & \ldots & \lambda_{N+2}^{t-1}\\ 
            \vdots & \vdots & \ddots & \vdots\\
            1 & \lambda_{n} & \ldots & \lambda_{n}^{t-1} 
        \end{bmatrix}.
    \end{equation} 
    Then 
    \begin{equation*}
        \norm{\van_{\perp}\van_{N}^{\dagger}}\leq \frac{\sqrt{n-N}}{(t-1)!}\Bigl(\frac{2}{\gap}\Bigr)^{t-1}
    \end{equation*}
    with $\gap$ defined as in~\cref{defgap}.
\end{lemma}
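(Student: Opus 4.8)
The plan is to view each row of $\van_{\perp}\van_{N}^{\dagger}$ as a minimum‑norm ``quadrature'' weight vector and to bound its length by comparison with Lagrange interpolation on only $t$ of the nodes $\lambda_{1},\dotsc,\lambda_{N}$.

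Since $t\le N$ and the nodes $\lambda_{1}<\dotsb<\lambda_{N}$ are distinct, $\van_{N}\in\R^{N\times t}$ has full column rank, so $\van_{N}^{\Ttran}$ has full row rank. Fix a row index $i$ and set $\mu\defi\lambda_{N+i}$, so the $i$-th row of $\van_{\perp}$ is $w^{\Ttran}\defi(1,\mu,\dotsc,\mu^{t-1})$. The transpose of the $i$-th row of $\van_{\perp}\van_{N}^{\dagger}$ is then $z\defi(\van_{N}^{\Ttran})^{\dagger}w$, which---by the standard characterization of the pseudoinverse of a full‑row‑rank matrix---is the minimum‑Euclidean‑norm solution of $\van_{N}^{\Ttran}z=w$. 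Written out, $z\in\R^{N}$ is the shortest vector with $\sum_{j=1}^{N}z_{j}q(\lambda_{j})=q(\mu)$ for every polynomial $q$ of degree at most $t-1$.

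Consequently $\|z\|$ is bounded by the length of any such weight vector, and I will use the one produced by Lagrange interpolation on the first $t$ nodes $\lambda_{1},\dotsc,\lambda_{t}$: with $\ell_{k}$ the $k$-th Lagrange basis polynomial for these nodes, i.e.\ $\ell_{k}(x)=\prod_{m\ne k}(x-\lambda_{m})/(\lambda_{k}-\lambda_{m})$, the vector whose $k$-th entry is $\ell_{k}(\mu)$ for $k=1,\dotsc,t$ and $0$ otherwise reproduces every polynomial of degree $\le t-1$ at $\mu$ and is therefore feasible, so $\|z\|^{2}\le\sum_{k=1}^{t}\ell_{k}(\mu)^{2}$. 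To estimate $\ell_{k}(\mu)$, observe that $\mu=\lambda_{N+i}\ge\lambda_{N+1}\ge\lambda_{N}\ge\lambda_{m}$ for all $m\le N$, so every numerator factor $\mu-\lambda_{m}$ lies in $[0,\lambda_{n}-\lambda_{1}]$ and the numerator is at most $(\lambda_{n}-\lambda_{1})^{t-1}$; and by the definition~\eqref{defgap} of $\gap$ we have $\abs{\lambda_{k}-\lambda_{m}}\ge\abs{k-m}\,\gap\,(\lambda_{n}-\lambda_{1})$, so the denominator is at least $\bigl(\gap\,(\lambda_{n}-\lambda_{1})\bigr)^{t-1}(k-1)!\,(t-k)!$. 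This yields
\begin{equation*}
	\abs{\ell_{k}(\mu)}\le\frac{1}{\gap^{t-1}(k-1)!\,(t-k)!}=\frac{1}{\gap^{t-1}(t-1)!}\binom{t-1}{k-1},
\end{equation*}
and using $\sum_{k=1}^{t}\binom{t-1}{k-1}^{2}=\binom{2t-2}{t-1}\le 4^{t-1}$ we get $\|z\|\le 2^{t-1}/\bigl(\gap^{t-1}(t-1)!\bigr)$.

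Since this bound is uniform over the $n-N$ row indices, it follows that
$\norm{\van_{\perp}\van_{N}^{\dagger}}\le\norm{\van_{\perp}\van_{N}^{\dagger}}_{\fro}\le\sqrt{n-N}\cdot 2^{t-1}/\bigl(\gap^{t-1}(t-1)!\bigr)=\tfrac{\sqrt{n-N}}{(t-1)!}\bigl(\tfrac{2}{\gap}\bigr)^{t-1}$, as claimed. The only step that takes genuine thought is identifying a row of $\van_{\perp}\van_{N}^{\dagger}$ with a minimum‑norm polynomial‑reproduction functional; the rest is an elementary Lagrange estimate. It is essential that the interpolation be spread over exactly $t$ nodes rather than all $N$: this is precisely what makes $\gap$ enter with exponent $t-1$, so that the bound---and hence the convergence rate it feeds into later---deteriorates only polynomially, not exponentially, in $t$.
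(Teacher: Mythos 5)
Your proof is correct, and it reaches the same bound by a recognizably different bookkeeping of the same underlying idea (Lagrange interpolation on only $t$ of the nodes). The paper computes
\begin{equation*}
	\norm{\van_{\perp}\van_{N}^{\dagger}}^{2}
	=\max_{x\in\R^{t}}\frac{\norm{\van_{\perp}x}^{2}}{\norm{\van_{N}x}^{2}}
	=\max_{p\in\mathcal{P}_{t-1}}\frac{\sum_{j>N}p^{2}(\lambda_{j})}{\sum_{i\le N}p^{2}(\lambda_{i})},
\end{equation*}
expands $p=\sum_{i\le t}p(\lambda_{i})p_{i}$ in the Lagrange basis on the first $t$ nodes, applies Cauchy--Schwarz to get $p^{2}(\lambda_{j})\le\bigl(\sum_{i\le N}p^{2}(\lambda_{i})\bigr)\bigl(\sum_{i}\abs{p_{i}(\lambda_{j})}\bigr)^{2}$, and then bounds $\sum_{i}\abs{p_{i}(\lambda_{j})}\le(2/\gap)^{t-1}/(t-1)!$ via the identity $\sum_{i}\tfrac{1}{(i-1)!(t-i)!}=\tfrac{2^{t-1}}{(t-1)!}$. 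You instead decompose the \emph{matrix} row by row: each row of $\van_{\perp}\van_{N}^{\dagger}$ is the minimum-norm $z$ with $\van_{N}^{\Ttran}z=w$, i.e.\ the minimum-norm quadrature reproducing polynomials of degree $\le t-1$ at $\mu$; you bound its length by the $t$-node Lagrange weight vector and sum squares using Vandermonde's identity $\sum_{k}\binom{t-1}{k-1}^{2}=\binom{2t-2}{t-1}\le 4^{t-1}$, then pass from the Frobenius to the spectral norm. The two routes are dual: the paper's variational-over-polynomials argument bounds $\norm{\cdot}$ directly, while yours bounds $\norm{\cdot}_{\fro}$ via a per-row analysis, and the $\sqrt{n-N}$ factor plays the same role (number of rows vs.\ number of terms in the numerator sum). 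Your intermediate estimate $\sum_{k}\ell_{k}(\mu)^{2}\le\binom{2t-2}{t-1}/\bigl(\gap^{t-1}(t-1)!\bigr)^{2}$ is in fact slightly sharper than the paper's $\bigl(\sum_{i}\abs{p_{i}}\bigr)^{2}$, though you immediately relax it with $\binom{2t-2}{t-1}\le 4^{t-1}$, so the final constant is identical. The per-row minimum-norm framing is a nice way to see \emph{why} Lagrange weights on only $t$ nodes are the right comparison: they are a feasible competitor for the minimization that $\van_{N}^{\dagger}$ implicitly solves.
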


\begin{proof}
For an arbitrary vector $x = [x_{1},\dotsc,x_{t}]^{\Ttran}\in\R^{t}$, we have
    \begin{equation*}
        \norm{\van_{N}x}^{2} = \sum_{i=1}^{N}p^{2}(\lambda_{i})
        \quad \text{and} \quad 
        \norm{\van_{\perp}x}^{2} = \sum_{j=N+1}^{n}p^{2}(\lambda_{j}),
    \end{equation*}
    with the polynomial $p(\lambda) = \sum_{k=1}^{t}x_{k}\lambda^{k-1}$. By the assumptions, 
$\van_{N}$ has full column rank and
    \begin{equation}
        \label{estvan1}
        \norm{\van_{\perp}\van_{N}^{\dagger}}^{2} = \max_{x\in \R^{t}}\frac{\norm{\van_{\perp}x}^{2}}{\norm{\van_{N}x}^{2}} = \max_{p\in \mathcal{P}_{t-1}}\frac{\sum_{j=N+1}^{n}p^{2}(\lambda_{j})}{\sum_{i=1}^{N}p^{2}(\lambda_{i})},
    \end{equation} 
    where $\mathcal{P}_{t-1}$ denotes the vector space containing all polynomials of degree at most $t-1$. Choosing the Lagrange basis 
    \[
        p_{i}(\lambda) = \prod_{\substack{1\leq k\leq t\\ k\neq i}}\frac{\lambda-\lambda_{k}}{\lambda_{i}-\lambda_{k}}\in\mathcal{P}_{t-1}, \quad i = 1, \dotsc, t,
    \] 
    we can express an arbitrary polynomial $p\in\mathcal{P}_{t-1}$ as
    $
        p(\lambda) = p(\lambda_{1})p_{1}(\lambda) + \dotsb  + p(\lambda_{t})p_{t}(\lambda).
    $
    By the Cauchy--Schwarz inequality,
    \begin{equation} \label{eq:cauchyschwartz}
        p^{2}(\lambda) \leq \sum_{i=1}^{t}p^{2}(\lambda_{i})\sum_{i=1}^{t}p_{i}^{2}(\lambda)
        \leq \sum_{i=1}^{N}p^{2}(\lambda_{i})\Bigl(\sum_{i=1}^{t}\abs{p_{i}(\lambda)}\Bigr)^{2}.
    \end{equation}
    To process the last term, we assume that $\lambda_{N+1}\leq \lambda\leq \lambda_{n}$, which implies 
$\abs{\lambda_{i}-\lambda_{k}}/(\lambda-\lambda_{k})\geq \abs{i-k}\cdot\gap$ for $1\leq i\neq k\leq t$. In turn,
    \begin{equation*}
        \sum_{i=1}^{t}\abs{p_{i}(\lambda)}=\sum_{i=1}^{t}\prod_{\substack{1\leq k\leq t\\ k\neq i}}\frac{\lambda-\lambda_{k}}{\abs{\lambda_{i}-\lambda_{k}}}\leq \frac{1}{\gap^{t-1}}\sum_{i=1}^{t}\prod_{\substack{1\leq k\leq t\\ k\neq i}}\frac{1}{\abs{i-k}} = \Bigl(\frac{2}{\gap}\Bigr)^{t-1}\frac{1}{(t-1)!},
    \end{equation*}
    where the last equality utilizes the combinatorial identity
    \begin{equation*}
        \sum_{i=1}^{t}\prod_{\substack{1\leq k\leq t\\ k\neq i}}\frac{1}{\abs{i-k}}=\sum_{i=1}^{t}\frac{1}{(i-1)!(t-i)!}=\frac{1}{(t-1)!}\sum_{i=1}^{t} \binom{t-1}{i-1} =\frac{2^{t-1}}{(t-1)!}.
    \end{equation*}
    Plugging this bound into~\eqref{eq:cauchyschwartz} for $\lambda = \lambda_j$ gives
    \begin{equation*}
        \begin{aligned}
            \frac{\sum_{j=N+1}^{n}p^{2}(\lambda_{j})}{\sum_{i=1}^{N}p^{2}(\lambda_{i})}\leq \frac{n-N}{\bigl((t-1)!\bigr)^{2}}\Bigl(\frac{2}{\gap}\Bigr)^{2(t-1)}.
        \end{aligned}
    \end{equation*}
    Combined with~\cref{estvan1}, this completes the proof.
\end{proof}

By leveraging the randomness of $\omega$, \cref{proptan} allows us to prove~\cref{eqLan1}.
\begin{proof}[Proof of \cref{eqLan1}]
Because $V_{N}^{\Ttran}\omega$ is a Gaussian random vector, the matrix 
$\diagM{V_{N}^{\Ttran}\omega}$ is invertible (almost surely). Using $V_{N}^{\Ttran}K_{t} = \diagM{V_{N}^{\Ttran}\omega}\van_{N}$, the invertibility of this matrix allows us to conclude that the Krylov basis $K_{t} = [\omega,B\omega,\dotsc,B^{t-1}\omega]$ has full rank:
    \begin{equation}
        \label{fullrk}
        \rk(K_{t})\geq \rk(V_{N}^{\Ttran}K_{t}) = \rk(\van_{N})=t,
    \end{equation}
    where the last equality utilizes that the $N$ smallest eigenvalues of $B$ are mutually different.
Letting $V_{\perp}$ denote an orthonormal basis of the invariant subspace associated with $\lambda_{N+1},\dotsc,\lambda_{n}$, we now apply \cref{proptan} to estimate
    \begin{equation}
        \label{decoupleV}
        \begin{aligned}
            \tan^{2}\angle_{t}(V_{N},K_{t}) &=  \norm{V_{\perp}^{\Ttran}K_{t}(V_{N}^{\Ttran}K_{t})^{\dagger}}^{2} \\ 
            &= \norm{\diagM{V_{\perp}^{\Ttran}\omega}\van_{\perp}(\diagM{V_{N}^{\Ttran}\omega}\van_{N})^{\dagger}}^{2}\\
            &\leq \frac{\sigma_{\max}^{2}(\diagM{V_{\perp}^{\Ttran}\omega})}{\sigma_{\min}^{2}(\diagM{V_{N}^{\Ttran}\omega})}\norm{\van_{\perp}\van_{N}^{\dagger}}^{2}.
        \end{aligned}
    \end{equation}
    Because $V_{\perp}^{\Ttran}\omega$ and $V_{N}^{\Ttran}\omega$ are independent Gaussian random vectors, we can use concentration and anti-concentration results for chi-squared random variables from~\cite[Lemma~1]{Laurent2000} and~\cite[Lemma~4]{Meyer2023}, respectively, to conclude that
    \begin{equation*}
        \frac{\sigma_{\max}^{2}(\diagM{V_{\perp}^{\Ttran}\omega})}{\sigma_{\min}^{2}(\diagM{V_{N}^{\Ttran}\omega})}\leq \frac{5\pi N^{2}\log\bigl(n/\delta\bigr)}{2\delta^{2}}
    \end{equation*}
    holds with probability at least $1-2\delta$. Combined with the bound
    on $\norm{\van_{\perp}\van_{N}^{\dagger}}$ from~\cref{lemVan}, this completes the proof of~\cref{eqLan1}.
\end{proof}

\subsubsection{The case $\ell > t$. Proof of~\cref{eqLan2}}
\label{thmLan2}
Our proof crucially relies on a simple but important observation from~\cite[(1.3)]{Meyer2023}, which establishes the following equivalence between the single-vector Krylov subspace \edit{$\mathcal{K}_{\ell}(B,\omega)$} and a block Krylov subspace generated from a specific starting matrix:
\edit{
\begin{equation}
    \label{connectionLanczos}
    \mathcal{K}_{\ell}(B,\omega) = \range[\omega,B\omega,\dotsc,B^{\ell-1}\omega] = \range[K_{t},BK_{t},\dotsc,B^{\ell-t}K_{t}],
\end{equation}}
with $t\leq \ell\leq n$.  The following lemma is a straightforward extension of known convergence results for Krylov subspaces to block Krylov subspaces.

\begin{lemma}
    \label{lemblk}
    For a symmetric matrix $B\in\R^{n\times n}$ with eigenvalues $\lambda_{1}<\lambda_{2}<\dotsb<\lambda_{N}<\lambda_{N+1}\leq \dotsb\leq \lambda_{n}$, let $V_{N} \in \R^{n\times N}$ be an orthonormal basis of the invariant subspace associated with $\lambda_1,\dotsc,\lambda_N$. Choose integers $t,\ell$ such that $1\leq t\leq N$ and $t\leq \ell\leq n$.
    For an $n\times t$ matrix $\Omega$ such that $\rk(V_{N}^{\Ttran}\Omega)=t$, let $K=[\Omega,B\Omega,\dotsc,B^{\ell-t}\Omega]\in\R^{n\times(\ell - t + 1)t}$. Then
    \begin{equation} \label{eq:lemblk}
        \tan\angle_{t}(V_{N},K)\leq \frac{\tan\angle_{t}(V_{N},\Omega)}{\cheb_{\ell-t}\Bigl(1+2 \frac{\lambda_{N+1}-\lambda_{N}}{\lambda_{n}-\lambda_{N+1}}\Bigr)},
    \end{equation}
    where $\cheb_{\ell-t}$ is the Chebyshev polynomial~\cref{defCheb} of degree $\ell - t$.
\end{lemma}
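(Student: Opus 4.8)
The plan is to prove \cref{lemblk} by the classical polynomial-acceleration argument. Since $\spa{K}=\spa{\Omega,B\Omega,\dotsc,B^{\ell-t}\Omega}$, it contains $\spa{q(B)\Omega}$ for \emph{every} polynomial $q$ of degree at most $\ell-t$; hence it suffices to choose a single well-adapted $q$ and bound $\tan\angle_{t}(V_{N},q(B)\Omega)$ using \cref{proptan}. The three ingredients are: (i) monotonicity of principal angles under enlargement of the second subspace, which lets us replace $K$ by $q(B)\Omega$; (ii) a shifted Chebyshev polynomial that is small on $[\lambda_{N+1},\lambda_{n}]$ and large on $\{\lambda_{1},\dotsc,\lambda_{N}\}$; and (iii) an elementary identity for the norm of $MP^{\dagger}$ when $P$ has full column rank.

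First I would fix the spectral decomposition $B=V_{N}\Lambda_{N}V_{N}^{\Ttran}+V_{\perp}\Lambda_{\perp}V_{\perp}^{\Ttran}$ with $[V_{N},V_{\perp}]$ orthogonal, $\Lambda_{N}=\diagM{\lambda_{1},\dotsc,\lambda_{N}}$, $\Lambda_{\perp}=\diagM{\lambda_{N+1},\dotsc,\lambda_{n}}$, and abbreviate $X=V_{N}^{\Ttran}\Omega$ (which has full column rank $t$ by hypothesis) and $Y=V_{\perp}^{\Ttran}\Omega$, so that $V_{N}^{\Ttran}q(B)\Omega=q(\Lambda_{N})X$ and $V_{\perp}^{\Ttran}q(B)\Omega=q(\Lambda_{\perp})Y$ for any polynomial $q$. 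Taking
\[
q(\lambda)=\cheb_{\ell-t}\Bigl(\tfrac{\lambda_{n}+\lambda_{N+1}-2\lambda}{\lambda_{n}-\lambda_{N+1}}\Bigr),
\]
the affine argument maps $[\lambda_{N+1},\lambda_{n}]$ onto $[-1,1]$, so $\norm{q(\Lambda_{\perp})}\le 1$; for $\lambda_{i}\le\lambda_{N}<\lambda_{N+1}$ the argument is at least $1+2\tfrac{\lambda_{N+1}-\lambda_{N}}{\lambda_{n}-\lambda_{N+1}}>1$, and since $\cheb_{\ell-t}$ is positive and increasing on $[1,\infty)$, every diagonal entry of $q(\Lambda_{N})$ is at least $c:=\cheb_{\ell-t}\!\bigl(1+2\tfrac{\lambda_{N+1}-\lambda_{N}}{\lambda_{n}-\lambda_{N+1}}\bigr)$. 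In particular $q(\Lambda_{N})$ is invertible, so $q(\Lambda_{N})X$ — and hence $q(B)\Omega$ — has full column rank $t\le N$. (If $\lambda_{n}=\lambda_{N+1}$ one instead uses $q(\lambda)=\bigl((\lambda_{N+1}-\lambda)/(\lambda_{N+1}-\lambda_{N})\bigr)^{\ell-t}$, for which the bound below is trivial.)

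Next, using that $\spa{q(B)\Omega}$ is a $t$-dimensional subspace of $\spa{K}$ and that principal angles are monotone under enlargement of the second subspace — which follows from $\cos^{2}\angle_{t}(\mathcal{A},\mathcal{S})=\max_{\dim\mathcal{U}=t,\ \mathcal{U}\subseteq\mathcal{A}}\ \min_{0\neq u\in\mathcal{U}}\norm{P_{\mathcal{S}}u}^{2}/\norm{u}^{2}$ together with $\norm{P_{\mathcal{S}'}u}\le\norm{P_{\mathcal{S}}u}$ whenever $\mathcal{S}'\subseteq\mathcal{S}$ — I obtain $\tan\angle_{t}(V_{N},K)\le\tan\angle_{t}(V_{N},q(B)\Omega)$. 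Applying \cref{proptan} to $q(B)\Omega$ and to $\Omega$ (the rank hypotheses hold by the previous paragraph) reduces the claim to
\[
\norm{q(\Lambda_{\perp})Y\,(q(\Lambda_{N})X)^{\dagger}}\le\tfrac{1}{c}\,\norm{Y X^{\dagger}}=\tfrac{1}{c}\,\tan\angle_{t}(V_{N},\Omega).
\]
For a matrix $P$ of full column rank one has $\norm{MP^{\dagger}}=\max_{w\neq0}\norm{Mw}/\norm{Pw}$; applying this with $P=q(\Lambda_{N})X$, bounding the numerator by $\norm{Yw}$ via $\norm{q(\Lambda_{\perp})}\le1$ and the denominator from below by $c\norm{Xw}$ via the entrywise bound on $q(\Lambda_{N})$, and using the same identity backwards for $\norm{YX^{\dagger}}$, finishes the proof.

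I expect the reduction in the third step to need the most care: one must verify that all the ranks in sight equal $t$, so that \cref{proptan} genuinely applies to $q(B)\Omega$, and state the monotonicity-of-angles fact correctly in the unequal-dimension setting, where $\spa{V_{N}}$ has dimension $N$ while $\spa{q(B)\Omega}$ and $\spa{K}$ generally do not. The Chebyshev estimates and the pseudoinverse norm identity are routine.
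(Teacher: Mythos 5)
Your proposal is correct and follows essentially the same route as the paper: pass from $\spa{K}$ to the $t$-dimensional subspace $\spa{q(B)\Omega}$ by monotonicity of principal angles (the paper writes this subspace as $\spa{KH_K}$ with $H_K=[\alpha_0 I_t,\dotsc,\alpha_{\ell-t}I_t]^\Ttran$, which is exactly $q(B)\Omega$), apply \cref{proptan} to both $q(B)\Omega$ and $\Omega$, and take $q$ to be the shifted Chebyshev polynomial so that $\norm{q(\Lambda_\perp)}\le 1$ while $q(\Lambda_N)\succeq cI$. Your rank checks and Chebyshev estimates match the paper's.

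One of your steps is in fact handled more carefully than in the paper. The paper passes through the chain
\[
\norm{V_\perp^\Ttran KH_K\,(V_N^\Ttran KH_K)^\dagger}
= \norm{p(\Lambda_\perp)V_\perp^\Ttran\Omega\,(V_N^\Ttran\Omega)^\dagger\,p(\Lambda_N)^{-1}},
\]
which silently invokes the reverse-order law $(PX)^\dagger = X^\dagger P^{-1}$ for an invertible $N\times N$ matrix $P=p(\Lambda_N)$ and a full-column-rank $N\times t$ matrix $X=V_N^\Ttran\Omega$. That identity is false when $t<N$ (e.g.\ $P=\diagM{1,2}$, $X=(1,1)^\Ttran$ gives $(PX)^\dagger=(1/5,2/5)$ but $X^\dagger P^{-1}=(1/2,1/4)$); it holds only in the square case $t=N$. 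Your route via the variational identity $\norm{MP^\dagger}=\max_{w\neq 0}\norm{Mw}/\norm{Pw}$ for $P$ of full column rank sidesteps this entirely, bounding $\norm{q(\Lambda_\perp)Yw}\le\norm{Yw}$ and $\norm{q(\Lambda_N)Xw}\ge c\norm{Xw}$ directly. This is the cleaner way to reach the bound $\max_j|p(\lambda_j)|/\min_i|p(\lambda_i)|\cdot\tan\angle_t(V_N,\Omega)$; it also makes the needed rank hypotheses for \cref{proptan} transparent. Flagging the degenerate case $\lambda_n=\lambda_{N+1}$ is a nice extra touch that the paper glosses over.
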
    
\begin{proof}
Let $p(\lambda) = \alpha_0 + \alpha_1 \lambda + \dotsb \alpha_{\ell -t} \lambda^{\ell - t}$ be such that $p(\lambda) \not=0$ for 
$\lambda_{1}\leq \lambda\leq \lambda_{N}$. Defining the matrix $H_{K} = [\alpha_{0}I_{t},\alpha_{1}I_{t},\dotsc,\alpha_{\ell-t}I_{t}]^{\Ttran}$, we have that 
    \begin{equation*}
        V_{N}^{\Ttran}KH_{K} = \sum_{i=0}^{\ell-t}\alpha_{i}V_{N}^{\Ttran}B^{i}\Omega = \sum_{i=0}^{\ell-t}\alpha_{i}\Lambda_{N}^{i}V_{N}^{\Ttran}\Omega=p(\Lambda_{N})V_{N}^{\Ttran}\Omega,
    \end{equation*}
    and an analogous formula holds when $V_{N}$ is replaced by $V_\perp$.
    By assumption, $p(\Lambda_{N})$ is invertible and thus $V_{N}^{\Ttran}KH_{K}$ has rank $t$. Additionally, using that
    \edit{$\range(KH_{K})\subset \range (K)$}
    and 
    the expression for $\tan\angle_{t}$ from \cref{proptan}, we obtain that
    \redit{
    \begin{equation*}
        \begin{aligned}
            \tan\angle_{t}(V_{N},K)&\leq \tan\angle_{t}(V_{N},KH_{K})= \norm{V_{\perp}^{\Ttran}KH_{K}(V_{N}^{\Ttran}KH_{K})^{\dagger}} \\ 
            &=\max_{x\in\R^{t}}\frac{\norm{V_{\perp}^{\Ttran}KH_{K}x}}{\norm{V_{N}^{\Ttran}KH_{K}x}}
            =\max_{x\in\R^{t}}
            \frac{\norm{p(\Lambda_{\perp})V_{\perp}^{\Ttran}\Omega x}}{\norm{p(\Lambda_{N})V_{N}^{\Ttran}\Omega x}}\\
            &\leq \frac{\max\limits_{N+1\leq j\leq n}\abs{p(\lambda_{j})}}{\min\limits_{1\leq i\leq N}\abs{p(\lambda_{i})}}
            \max_{x\in\R^{t}}\frac{\norm{V_{\perp}^{\Ttran}\Omega x}}{\norm{V_{N}^{\Ttran}\Omega x}} 
            = \frac{\max\limits_{N+1\leq j\leq n}\abs{p(\lambda_{j})}}{\min\limits_{1\leq i\leq N}\abs{p(\lambda_{i})}}\norm{V_{\perp}^{\Ttran}\Omega(V_{N}^{\Ttran}\Omega)^{\dagger}}\\ 
            &=\frac{\max\limits_{N+1\leq j\leq n}\abs{p(\lambda_{j})}}{\min\limits_{1\leq i\leq N}\abs{p(\lambda_{i})}}\tan\angle_{t}(V_{N},\Omega).
        \end{aligned}
    \end{equation*}}
    The proof is completed by setting
    $p(\lambda) = \cheb_{\ell-t}\bigl(1+\frac{2(\lambda_{N+1}-\lambda)}{\lambda_{n}-\lambda_{N+1}}\bigr)$ and using that a Chebyshev polynomial is bounded by one in magnitude inside $[-1,1]$ and monotone outside that interval.
\end{proof}

\subsection{Convergence of \cref{algo} \edit{with a single initial vector}}

In this subsection, we investigate the quality of the approximate null space produced by \cref{algo} via applying the Rayleigh--Ritz procedure to the Krylov subspace.
Unlike existing results on the block Lanczos method~\cite{Li2015}, we also present the convergence of lower-dimensional subspaces by bounding the principal angles $\angle_{t}$ for all $1 \leq t \leq N$. We start with a result that establishes the quasi-optimality of Rayleigh--Ritz.

\begin{lemma}
    \label{lemAVW}
    Under the setting of~\cref{thmVANg}, let $V$ be the approximate null space basis returned after $\ell\geq N$ iterations of \cref{algo}.
    Let $\tilde{\lambda}_{t}^{(0)}$ be the $t$th smallest eigenvalue of $Z_{\ell}^{\Ttran}A^{\Ttran}AZ_{\ell}$, where $1\leq t\leq \ell$ and $Z_{\ell}$ is an orthonormal basis of \edit{$\mathcal{K}_{\ell}(B,\omega)$}.
    Then for all $1\leq t\leq N$, there exists a matrix $W_{t}\in\R^{N\times t}$ with orthonormal columns such that 
    \begin{equation}
        \label{estAVWt}  
        \norm{AVW_{t}}^{2} \leq \tilde{\lambda}_{t}^{(0)} +\epsilon.
    \end{equation}

    Furthermore, if $t=N$ and $0<\epsilon<\underline{\sigma}^{2}-\tilde \lambda_N^{(0)}$, then 
    \begin{equation}
        \label{estAVWN}
        \norm{AV}^{2} \leq \tilde{\lambda}_{N}^{(0)} +\frac{\epsilon^{2}}{\underline{\sigma}^{2}-\tilde{\lambda}_{N}^{(0)} - \epsilon}.
    \end{equation}
\end{lemma}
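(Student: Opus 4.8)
The plan is to apply \cref{propEvalPert} in the compressed space spanned by $Z_{\ell}$. Set $\hat{B} := T_{\ell} = Z_{\ell}^{\Ttran}BZ_{\ell}$ and $\hat{B}_{0} := Z_{\ell}^{\Ttran}A^{\Ttran}AZ_{\ell}$, both $\ell\times\ell$ symmetric positive semi-definite matrices. Then $\hat{B}-\hat{B}_{0} = \epsilon Z_{\ell}^{\Ttran}DZ_{\ell}$ is positive semi-definite with $\|\hat{B}-\hat{B}_{0}\|\leq\epsilon\|D\|\leq\epsilon$. By construction, the columns of $V_{Z}$ form an orthonormal basis of the invariant subspace of $\hat{B}$ associated with its $N$ smallest eigenvalues, $V = Z_{\ell}V_{Z}$, and the eigenvalues of $\hat{B}_{0}$ are exactly $\tilde{\lambda}_{1}^{(0)}\leq\dotsb\leq\tilde{\lambda}_{\ell}^{(0)}$.

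For the first bound, fix $1\leq t\leq N$ and diagonalize the $N\times N$ matrix $V_{Z}^{\Ttran}\hat{B}V_{Z}$; letting the columns of $W_{t}\in\R^{N\times t}$ be $t$ orthonormal eigenvectors belonging to its $t$ smallest eigenvalues, the columns of $V_{Z}W_{t}$ form an orthonormal basis of the invariant subspace of $\hat{B}$ associated with its $t$ smallest eigenvalues, because $\spa{V_{Z}}$ is $\hat{B}$-invariant and contains this subspace. Applying the first inequality of \cref{propEvalPert} with $(B,B_{0},N,V_{N})$ replaced by $(\hat{B},\hat{B}_{0},t,V_{Z}W_{t})$ gives $\|(V_{Z}W_{t})^{\Ttran}\hat{B}_{0}(V_{Z}W_{t})\|\leq\tilde{\lambda}_{t}^{(0)}+\|\hat{B}-\hat{B}_{0}\|\leq\tilde{\lambda}_{t}^{(0)}+\epsilon$. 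Since $(V_{Z}W_{t})^{\Ttran}\hat{B}_{0}(V_{Z}W_{t}) = (AVW_{t})^{\Ttran}(AVW_{t})$ and $\|M\|^{2} = \|M^{\Ttran}M\|$ for any matrix $M$, this is precisely \cref{estAVWt}.

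For the second bound, take $t = N$, so that $V_{Z}$ itself plays the role of $V_{N}$ in \cref{propEvalPert} and $V = Z_{\ell}V_{Z}$. Assume first $\ell\geq N+1$. Cauchy's interlacing theorem applied to $\hat{B}_{0} = Z_{\ell}^{\Ttran}A^{\Ttran}AZ_{\ell}$ yields $\tilde{\lambda}_{N+1}^{(0)}\geq\lambda_{N+1}(A^{\Ttran}A) = \underline{\sigma}^{2}$, the last equality using that $A$ has exactly $N$ zero singular values. Together with the hypothesis $\epsilon<\underline{\sigma}^{2}-\tilde{\lambda}_{N}^{(0)}$ this gives $\|\hat{B}-\hat{B}_{0}\|\leq\epsilon<\underline{\sigma}^{2}-\tilde{\lambda}_{N}^{(0)}\leq\tilde{\lambda}_{N+1}^{(0)}-\tilde{\lambda}_{N}^{(0)}$, so the gap hypothesis of the second inequality of \cref{propEvalPert} is satisfied. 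That inequality then gives $\|AV\|^{2} = \|V_{Z}^{\Ttran}\hat{B}_{0}V_{Z}\|\leq\tilde{\lambda}_{N}^{(0)}+\|\hat{B}-\hat{B}_{0}\|^{2}\bigl(\tilde{\lambda}_{N+1}^{(0)}-\tilde{\lambda}_{N}^{(0)}-\|\hat{B}-\hat{B}_{0}\|\bigr)^{-1}$, and \cref{estAVWN} follows by monotonicity: $x\mapsto x^{2}/(c-x)$ is increasing on $(0,c)$, so $\|\hat{B}-\hat{B}_{0}\|$ may be replaced by $\epsilon$, and then $c\mapsto\epsilon^{2}/(c-\epsilon)$ is decreasing, so $\tilde{\lambda}_{N+1}^{(0)}-\tilde{\lambda}_{N}^{(0)}$ may be replaced by the smaller quantity $\underline{\sigma}^{2}-\tilde{\lambda}_{N}^{(0)}$.

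There is no deep obstacle here: the lemma is essentially a corollary of \cref{propEvalPert} once one commits to working with the Rayleigh quotient matrices $T_{\ell}$ and $Z_{\ell}^{\Ttran}A^{\Ttran}AZ_{\ell}$. The only points requiring a little care are verifying that $V_{Z}W_{t}$ spans the correct invariant subspace of $T_{\ell}$, invoking Cauchy interlacing to get $\tilde{\lambda}_{N+1}^{(0)}\geq\underline{\sigma}^{2}$, and disposing of the degenerate case $\ell = N$ — in which $\hat{B}_{0}$ is $N\times N$, $\spa{V} = \spa{Z_{\ell}}$, and $\|AV\|^{2} = \lambda_{\max}(\hat{B}_{0}) = \tilde{\lambda}_{N}^{(0)}$, so \cref{estAVWN} holds trivially because $\underline{\sigma}^{2}-\tilde{\lambda}_{N}^{(0)}-\epsilon>0$.
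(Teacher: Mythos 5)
Your proposal is correct and follows essentially the same route as the paper: work in the compressed space, apply the gap-independent bound of \cref{propEvalPert} to $Z_{\ell}^{\Ttran}BZ_{\ell}$ versus $Z_{\ell}^{\Ttran}A^{\Ttran}AZ_{\ell}$ for the first estimate, then invoke Cauchy interlacing ($\tilde{\lambda}_{N+1}^{(0)}\geq\underline{\sigma}^{2}$) to verify the gap condition and apply the quadratic bound for the second, with the degenerate $\ell=N$ case handled separately. You are merely somewhat more explicit than the paper on the construction of $W_{t}$ as eigenvectors of $V_{Z}^{\Ttran}T_{\ell}V_{Z}$ and on the monotonicity step that replaces $\|\hat{B}-\hat{B}_{0}\|$ by $\epsilon$ and $\tilde{\lambda}_{N+1}^{(0)}-\tilde{\lambda}_{N}^{(0)}$ by $\underline{\sigma}^{2}-\tilde{\lambda}_{N}^{(0)}$.
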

\begin{proof}
    Let $V_{Z}$ and $V_{Z}W_{t}$ be two matrices with orthonormal columns, where their columns form orthonormal bases for the invariant subspaces associated with the $N$ and $t$ smallest eigenvalues of $Z_{\ell}^{\Ttran}BZ_{\ell}$, respectively. By definition, $V=Z_{\ell}V_{Z}$.
The gap-independent perturbation result in \cref{propEvalPert} allows us to conclude~\eqref{estAVWt}:
\begin{equation*}
    \norm{AVW_{t}}^{2}=\norm{(V_{Z}W_{t})^{\Ttran}( Z_\ell^{\Ttran} A^{\Ttran} A Z_\ell ) (V_{Z}W_{t}) } \le \tilde \lambda_t^{(0)} + \norm{Z_\ell^{\Ttran} (B-A^{\Ttran} A) Z_\ell} \le 
 \tilde \lambda_t^{(0)} + \epsilon,
\end{equation*}
where we used that $\tilde \lambda_t^{(0)}$ is the $t$th smallest eigenvalue of $Z_\ell^{\Ttran} A^{\Ttran} A Z_\ell$.

For \cref{estAVWN}, the result when $\ell=N$ is straightforward since $\norm{AV}^{2}=\tilde{\lambda}_{N}^{(0)}$. To show the case $\ell>N$,
we first note that eigenvalue interlacing implies
$\tilde \lambda_{N+1}^{(0)}\geq \underline{\sigma}^{2}$.
Because of $\| Z_\ell^{\Ttran} B Z_\ell -  Z_\ell^{\Ttran} A^{\Ttran} A Z_\ell\| \le \epsilon < \underline{\sigma}^{2}-\tilde \lambda_N^{(0)} \le \tilde \lambda_{N+1}^{(0)} - \tilde \lambda_{N}^{(0)}$, 
we can apply the second, gap-dependent result in \cref{propEvalPert} to establish~\eqref{estAVWN}.
\end{proof}

By combining the eigenvalue repulsion result from \cref{thmgap}, the Krylov subspace convergence result from \cref{thmLan} and the quasi-optimality of Rayleigh--Ritz from \cref{lemAVW},
we can conclude our main convergence result on the convergence of \cref{algo}.

\begin{theorem}
    \label{thmmain}
    Let $\overline{\sigma}$ and $\underline{\sigma}$ denote the largest and smallest nonzero singular values of a matrix $A\in\R^{m \times n}$ having null space dimension $N$. Let $V$ be the approximate null space basis returned by \cref{algo} with $0<\epsilon<\underline{\sigma}^{2}/3$ and $N\leq \ell\leq n$. For every $1\leq t\leq N$, there exists a matrix $W_{t}\in\R^{N\times t}$ with orthonormal columns such that 
    \begin{equation}
        \label{estmaint}
        \norm{AVW_{t}}^{2}\leq \Bigl(\frac{\epsilon}{(\underline{\sigma}^{2}-\epsilon)^{1/2}}+\overline{\sigma}\tan\angle_{t}(V_{N},Z_{\ell})\Bigr)^{2}+\epsilon,
    \end{equation}
    holds with probability at least $1-3\delta$,
    where $\tan\angle_{t}(V_{N},Z_{\ell})$ is bounded according to~\cref{coreqtan}. For $t=N$, when
    \begin{equation}
        \label{contan}
        \tan\angle_{N}(V_{N},Z_{\ell})\leq \frac{2\underline{\sigma}^{2}-5\epsilon}{3\overline{\sigma}(\underline{\sigma}^{2}-\epsilon)^{1/2}},
    \end{equation}
    the bound can be improved to 
    \begin{equation}
        \label{estmainN}
        \norm{AV}^{2}\leq \Bigl(\frac{\epsilon}{(\underline{\sigma}^{2}-\epsilon)^{1/2}}+\overline{\sigma}\tan\angle_{N}(V_{N},Z_{\ell})\Bigr)^{2}+\frac{9\epsilon^{2}}{5(\underline{\sigma}^{2}-\epsilon)}.
    \end{equation}
\end{theorem}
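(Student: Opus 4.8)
The plan is to assemble \cref{thmmain} from three ingredients already in hand: the quasi-optimality of Rayleigh--Ritz from \cref{lemAVW}, the residual bound in terms of principal angles from \cref{corWt}, and the Krylov convergence estimate \cref{coreqtan}. The starting point is \cref{lemAVW}, which bounds $\norm{AVW_t}^2$ (respectively $\norm{AV}^2$ for $t = N$) in terms of $\tilde\lambda_t^{(0)}$, the $t$th smallest eigenvalue of $Z_\ell^\Ttran A^\Ttran A Z_\ell$. Since $\tilde\lambda_t^{(0)} = \min_{W} \norm{AZ_\ell W}^2$ over orthonormal $W \in \R^{\ell \times t}$ (by the Courant--Fischer characterization), it suffices to exhibit \emph{one} good candidate subspace of $\spa{Z_\ell}$ of dimension $t$. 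This is exactly what \cref{corWt} provides: taking $V_0 = V_N$ (the true null space basis, so $\norm{AV_0} = 0$) and $V_1 = Z_\ell$, there is an orthonormal $\widehat W_t$ with $\norm{AZ_\ell \widehat W_t} \le \norm{A}\sin\angle_t(V_N, Z_\ell) \le \overline\sigma \tan\angle_t(V_N, Z_\ell)$, where I use $\sin \le \tan$ and $\norm{A} = \overline\sigma$. Hence $\tilde\lambda_t^{(0)} \le \overline\sigma^2 \tan^2\angle_t(V_N, Z_\ell)$.

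Next I would feed this into the two conclusions of \cref{lemAVW}. For \cref{estmaint}: directly, $\norm{AVW_t}^2 \le \tilde\lambda_t^{(0)} + \epsilon \le \overline\sigma^2\tan^2\angle_t(V_N, Z_\ell) + \epsilon$. This is slightly weaker than the claimed bound, which has the extra $\epsilon/(\underline\sigma^2 - \epsilon)^{1/2}$ term inside the square; I expect the stated form is written for uniformity with \cref{estmainN} and follows since $\bigl(\epsilon(\underline\sigma^2-\epsilon)^{-1/2} + \overline\sigma\tan\angle_t\bigr)^2 \ge \overline\sigma^2\tan^2\angle_t$. (Alternatively, one can invoke \cref{corWt} with $V_0$ chosen as $V_N$ but measure the residual of $A$ directly rather than of $A^\Ttran A$; either route gives \cref{estmaint}.) The condition $\epsilon < \underline\sigma^2/3$ ensures $\epsilon < \underline\sigma^2$ so that \cref{coreqtan} and \cref{propEvalPert} apply, and it is what makes $\underline\sigma^2 - \epsilon > 0$.

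For the sharper bound \cref{estmainN} at $t = N$, I would apply the second, gap-dependent estimate of \cref{lemAVW}, namely $\norm{AV}^2 \le \tilde\lambda_N^{(0)} + \epsilon^2/(\underline\sigma^2 - \tilde\lambda_N^{(0)} - \epsilon)$, which requires the hypothesis $0 < \epsilon < \underline\sigma^2 - \tilde\lambda_N^{(0)}$. The main technical point — and the step I expect to be fiddliest — is verifying that the stated angle condition \cref{contan} together with $\epsilon < \underline\sigma^2/3$ guarantees this hypothesis and also yields the clean denominator $\frac{9\epsilon^2}{5(\underline\sigma^2-\epsilon)}$. Concretely: from \cref{contan} one gets $\overline\sigma\tan\angle_N \le \frac{2\underline\sigma^2 - 5\epsilon}{3(\underline\sigma^2-\epsilon)^{1/2}}$, so $\tilde\lambda_N^{(0)} \le \overline\sigma^2\tan^2\angle_N \le \frac{(2\underline\sigma^2-5\epsilon)^2}{9(\underline\sigma^2-\epsilon)}$; one then checks $\underline\sigma^2 - \tilde\lambda_N^{(0)} - \epsilon \ge \underline\sigma^2 - \epsilon - \frac{(2\underline\sigma^2-5\epsilon)^2}{9(\underline\sigma^2-\epsilon)} = \frac{9(\underline\sigma^2-\epsilon)^2 - (2\underline\sigma^2-5\epsilon)^2}{9(\underline\sigma^2-\epsilon)}$, and a short factorization of the numerator (it equals $(5\underline\sigma^2 - 8\epsilon)(\underline\sigma^2 - 2\epsilon)$, both factors positive when $\epsilon < \underline\sigma^2/3$) shows this is at least $\frac{5}{9}(\underline\sigma^2-\epsilon)$ after using $5\underline\sigma^2 - 8\epsilon \ge 5(\underline\sigma^2 - \epsilon)$ and $\underline\sigma^2 - 2\epsilon \ge \underline\sigma^2 - \epsilon$ — wait, more carefully one needs a lower bound of the form $c(\underline\sigma^2-\epsilon)$ with $c = 5/9$, which gives $\epsilon^2/(\underline\sigma^2 - \tilde\lambda_N^{(0)} - \epsilon) \le \frac{9\epsilon^2}{5(\underline\sigma^2-\epsilon)}$. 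Finally I absorb $\tilde\lambda_N^{(0)} \le \overline\sigma^2\tan^2\angle_N \le \bigl(\epsilon(\underline\sigma^2-\epsilon)^{-1/2} + \overline\sigma\tan\angle_N\bigr)^2$ into the squared term, and combine \cref{thmgap}, \cref{thmLan}/\cref{coreqtan} (each holding up to probability $\delta$, $2\delta$ — but the corollary already states $1 - 3\delta$) to get the overall probability $1 - 3\delta$. The arithmetic in the gap estimate is the only place where care is needed; everything else is bookkeeping.
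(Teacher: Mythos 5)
Your overall outline is correct and mirrors the paper's proof: use \cref{lemAVW} to reduce to bounding $\tilde\lambda_t^{(0)}$, bound the latter by exhibiting a good subspace of $\spa{Z_\ell}$ via \cref{corWt}, then plug in the angle bound from \cref{coreqtan}. But there is a genuine gap in the identification of $V_N$. In \cref{thmmain} (and throughout \cref{coreqtan}, \cref{thmVANg}), $V_N$ is \emph{not} the true null space of $A$: it is the orthonormal basis of the invariant subspace of $B = A^\Ttran A + \epsilon D$ associated with its $N$ smallest eigenvalues. Consequently $\norm{AV_N} \neq 0$; rather it is bounded by \cref{thmVANg} as $\norm{AV_N} \le \epsilon(\underline{\sigma}^2 - \epsilon)^{-1/2}$. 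This is precisely where the extra $\epsilon/(\underline{\sigma}^2-\epsilon)^{1/2}$ inside the square in \cref{estmaint} comes from --- it is not a matter of ``writing for uniformity,'' it is a real contribution. Your hedge (take $V_0 = $ true null space, so $\norm{AV_0}=0$) produces the angle $\angle_t(\mathrm{null}(A), Z_\ell)$, which is a different quantity from $\angle_t(V_N, Z_\ell)$ and is \emph{not} what \cref{coreqtan} bounds, since the Krylov subspace is built from $B$ and converges to $B$'s invariant subspace, not $A$'s null space. The paper's derivation is
\[
\sqrt{\tilde\lambda_t^{(0)}} \le \norm{AV_N} + \overline{\sigma}\sin\angle_t(V_N, Z_\ell) \le \frac{\epsilon}{(\underline{\sigma}^2-\epsilon)^{1/2}} + \overline{\sigma}\tan\angle_t(V_N, Z_\ell),
\]
combining \cref{corWt}, \cref{thmVANg}, and $\sin\le\tan$, and then both parts of \cref{lemAVW} give the two claimed bounds.

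Your gap verification for \cref{estmainN} therefore also starts from the wrong premise $\tilde\lambda_N^{(0)} \le \overline{\sigma}^2\tan^2\angle_N$. The paper instead inserts the full $\bigl(\epsilon(\underline{\sigma}^2-\epsilon)^{-1/2} + \overline{\sigma}\tan\angle_N\bigr)^2$ and observes that under \cref{contan} this quantity is at most $\frac{4}{9}(\underline{\sigma}^2-\epsilon)$ (a much cleaner computation: adding the two fractions over the common denominator $3(\underline{\sigma}^2-\epsilon)^{1/2}$ immediately gives $\frac{2}{3}(\underline{\sigma}^2-\epsilon)^{1/2}$), so that $\underline{\sigma}^2 - \epsilon - \tilde\lambda_N^{(0)} \ge \frac{5}{9}(\underline{\sigma}^2-\epsilon)$. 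As a smaller aside, your factorization $9(\underline{\sigma}^2-\epsilon)^2 - (2\underline{\sigma}^2-5\epsilon)^2 = (5\underline{\sigma}^2-8\epsilon)(\underline{\sigma}^2-2\epsilon)$ is incorrect; the difference of squares gives $(3a-b)(3a+b)$ with $a = \underline{\sigma}^2-\epsilon$, $b = 2\underline{\sigma}^2-5\epsilon$, which is $(\underline{\sigma}^2+2\epsilon)(5\underline{\sigma}^2-8\epsilon)$.
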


\edit{The convergence results in \cref{estmaint,estmainN} consist of two components. The first component (terms in parentheses) captures the convergence of the Krylov subspace method to produce approximations to eigenvalues and eigenvectors of $B$. 
The second component, discussed in \cref{lemAVW}, quantifies the quasi-optimality of the Rayleigh--Ritz procedure when applied to $B$ instead of $A^{\Ttran}A$. 
When measuring the convergence for all zero singular values in \cref{estmainN}, the magnitudes of these two components are both of order $\epsilon^{2}$,} \editr{and this result is sharp up to a constant for some matrices.}

\begin{proof}[Proof of \cref{thmmain}]
    Using~\cref{lemAVW}, it suffices to bound $\tilde{\lambda}_t^{(0)}$, the $t$th smallest eigenvalue of $Z_{\ell}^{\Ttran}A^{\Ttran}AZ_{\ell}$. Using~\cref{corWt}, 
    \cref{thmVANg}, and $\abs{\sin\angle}\leq \abs{\tan\angle}$, we derive 
\[
    \sqrt{\tilde \lambda_t^{(0)}} \leq \norm{AV_{N}}+\norm{A}\sin\angle_{t}(V_{N},Z_{\ell})\leq \frac{\epsilon}{(\underline{\sigma}^{2}-\epsilon)^{1/2}}+\overline{\sigma}\tan\angle_{t}(V_{N},Z_{\ell}).  
\]
Combined with~\cref{estAVWt}, this gives~\cref{estmaint}.

In order to prove \cref{estmainN}, we first note that the condition~\cref{contan} implies
\begin{equation*}
    \underline{\sigma}^{2}-\epsilon-\tilde \lambda_N^{(0)}
    \geq \underline{\sigma}^{2}-\epsilon-\Bigl(\frac{\epsilon}{(\underline{\sigma}^{2}-\epsilon)^{1/2}}+\overline{\sigma}\tan\angle_{N}(V_{N},Z_{\ell})\Bigr)^{2}\geq \frac{5(\underline{\sigma}^{2}-\epsilon)}{9}>0.
\end{equation*}
Thus, the condition for the bound~\cref{estAVWN} in \cref{lemAVW} is met, which implies the desired result \cref{estmainN}.
\end{proof}

\begin{remark}
    \label{cormain}
    Let $\kappa=\overline{\sigma}/\underline{\sigma}$ be the effective condition number of $A$, and $\epsilon=c\underline{\sigma}^{2}$, where $0<c<1/3$.
    \cref{thmmain} states that, with probability at least $1-3\delta$,
    \begin{equation*}
        \ell = \order\biggl(1+\frac{\log\bigl((n\kappa)/(t\delta\epsilon)\bigr)}{\log(1+2\kappa^{-1})}\biggr)t
    \end{equation*}
    iterations of \cref{algo} are needed to partially approximate the null space of $A$ in the sense of
    \begin{equation*}
        \tan\angle_{t}(V_{N},Z_{\ell}) \leq \frac{\epsilon}{\sqrt{6}\overline{\sigma}\underline{\sigma}}\leq \frac{2\underline{\sigma}^{2}-5\epsilon}{3\overline{\sigma}(\underline{\sigma}^{2}-\epsilon)^{1/2}},
        \quad\text{and}\quad 
        \norm{AVW_{t}}^{2}\leq \frac{17\epsilon}{9},
    \end{equation*}
    where we used Stirling's formula and a known bound for Chebyshev polynomials in \cite[Section~4.4.1]{Saad2011}. 
    Let $BZ_{\ell}=Z_{\ell}T_{\ell}+Q_{\ell+1}E_{\ell+1}$ represent the Lanczos decomposition computed by \cref{algo}. Then, $T_{\ell}-3\epsilon I$ has at least $t$ negative eigenvalues, all originating from Ritz vectors close to the null space. This assertion stems from the fact that the $(N + 1)$st smallest eigenvalue of $B$ is at least $\underline{\sigma}^{2}=\epsilon/c>3\epsilon$. Moreover, for $t=N$, the error bound improves to 
    \begin{equation*}
        \norm{AV}^{2}\leq \Bigl(\frac{\epsilon}{(\underline{\sigma}^{2}-\epsilon)^{1/2}}+\frac{\epsilon}{\sqrt{6}\underline{\sigma}}\Bigr)^{2}+\frac{9\epsilon^{2}}{5(\underline{\sigma}^{2}-\epsilon)} \leq \frac{161\epsilon^{2}}{30\underline{\sigma}^{2}}.
    \end{equation*}
\end{remark}

Our theoretical results highlight a crucial distinction between the convergence behaviors of the single-vector and block Lanczos methods. In contrast to the block Lanczos method~\cite{Li2015}, where Ritz vectors converge collectively, the single-vector Lanczos method exhibits individual convergence of Ritz vectors. 
This individual convergence allows for deflation and adaptive detection of the null space dimension, which is particularly advantageous in the typical scenario where prior information about $N$ is unavailable.

\section{Practical techniques for implementation}
\label{sec:implementation}
In this section, we discuss several improvements and implementation details to make \cref{algo} competitive.
\subsection{Restarting}
\cref{algo} needs to store the orthonormal basis $Z_{\ell}$ of the Krylov subspace to allow for reorthogonalization (see below) and Ritz vector computations. As the number of iterations $\ell$ required for convergence increases, storing $Z_{\ell}$ becomes challenging due to memory limitations. Additionally, the cost of Rayleigh--Ritz extraction grows significantly with $\ell$. To address these issues, we employ a restart mechanism, akin to the Krylov--Schur method \cite{Stewart2002, Zhou2008,Wu2000}.

To conveniently describe our restarting mechanism, we use the superscripts $\cdot^{(\old)}$ and $\cdot^{(\new)}$ to denote variables before and after restarting, respectively.
We assume that the dimension of the Krylov subspace before restarting is $\ell d$ and that 
$\ell_{0}d$ Ritz pairs are retained after restarting, where $\ell_0 < \ell$ and $d$ represents the block size of the initial matrix $\Omega$. 

Suppose we have a Lanczos decomposition of the form
\begin{equation}
    \label{laniter}
    BZ_{\ell}^{(\old)} = Z_{\ell}^{(\old)}T_{\ell}^{(\old)}+Q_{\ell+1}^{(\old)}E_{\ell+1}^{(\old)},
\end{equation} 
where $[Z_{\ell}^{(\old)},Q_{\ell+1}^{(\old)}]$ is a matrix with orthonormal columns, $E_{\ell+1}^{(\old)}=[0,\Gamma_{\ell}^{(\old)}]$, and 
\edit{
\begin{equation}
    \label{defT}
    T_{\ell}^{(\old)} = \begin{bmatrix}
        \Theta^{(\old)} & (F^{(\old)})^{\Ttran} &&&&\\
        F^{(\old)} &\Ta_{\ell_{0}+1}^{(\old)} & (\Tb_{\ell_{0}+1}^{(\old)})^{\Ttran} & & &\\ 
        &\Tb_{\ell_{0}+1}^{(\old)} & \Ta_{\ell_{0}+2}^{(\old)} & (\Tb_{\ell_{0}+2}^{(\old)})^{\Ttran} & &\\ 
        && \ddots & \ddots & \ddots &\\ 
        &&&\Tb_{\ell-2}^{(\old)} & \Ta_{\ell-1}^{(\old)} & (\Tb_{\ell-1}^{(\old)})^{\Ttran}\\
        &&&&\Tb_{\ell-1}^{(\old)} & \Ta_{\ell}^{(\old)} \\
    \end{bmatrix}
\end{equation}}
is a block tridiagonal matrix, partitioned such that $\Theta^{(\old)} \in \R^{\ell_0 d \times \ell_0 d}$ contains the information from the previous cycle. To restart and keep $\ell_{0}d$ Ritz vectors, we apply the Rayleigh--Ritz procedure:
\begin{equation*}
    T_{\ell}^{(\old)}V_{Z}^{(\new)}=V_{Z}^{(\new)}\Theta^{(\new)} 
    \quad \text{and} \quad 
    Z_{\ell_{0}}^{(\new)} = Z_{\ell}^{(\old)}V_{Z}^{(\new)} \in \R^{n \times \ell_0 d},
\end{equation*}
where $V_{Z}^{(\new)} \in \R^{\ell d \times \ell_0 d}$ is an orthonormal basis of the invariant subspace corresponding to the $\ell_{0}d$ smallest eigenvalues of $T_{\ell}^{(\old)}$. Multiplying $V_{Z}^{(\new)}$ from the right to the Lanczos decomposition \cref{laniter} leads to 
\begin{equation*}
    BZ_{\ell_{0}}^{(\new)} = Z_{\ell_{0}}^{(\new)}\Theta^{(\new)}+Q_{\ell+1}^{(\old)}E_{\ell+1}^{(\old)}V_{Z}^{(\new)}.
\end{equation*} 
We restart the block Lanczos method by defining the quantities
\begin{equation*}
    \begin{aligned}
        &Q_{\ell_{0}+1}^{(\new)} = Q_{\ell+1}^{(\old)}
          \quad \text{and}\quad
        Z_{\ell_{0}+1}^{(\new)} = [Z_{\ell_{0}}^{(\new)},Q_{\ell_{0}+1}^{(\new)}] \in \R^{n\times (\ell_0+1) d},\\
        & \edit{F^{(\new)} = E_{\ell+1}^{(\old)}V_{Z}^{(\new)}}
        \quad\text{and}\quad 
        \Ta_{\ell_{0}+1}^{(\new)} = (Q_{\ell_{0}+1}^{(\new)})^{\Ttran}BQ_{\ell_{0}+1}^{(\new)},\\ 
        &\bigl(I-Z_{\ell_{0}+1}^{(\new)}(Z_{\ell_{0}+1}^{(\new)})^{\Ttran}\bigr)BQ_{\ell_{0}+1}^{(\new)} = Q_{\ell_{0}+2}^{(\new)}\Tb_{\ell_{0}+1}^{(\new)},
    \end{aligned}
\end{equation*}
where the last line denotes a QR decomposition. After restarting, the new Lanczos decomposition takes the form
\begin{equation*}
    \begin{aligned}
        BZ_{\ell_{0}+1}^{(\new)} &= [BZ_{\ell_{0}}^{(\new)},BQ_{\ell_{0}+1}^{(\new)}] \\ 
        &= [Z_{\ell_{0}}^{(\new)},Q_{\ell_{0}+1}^{(\new)}]\begin{bmatrix}
            \Theta^{(\new)} & (E_{\ell+1}^{(\old)}V_{Z}^{(\new)})^{\Ttran}\\E_{\ell+1}^{(\old)}V_{Z}^{(\new)} & \Ta_{\ell_{0}+1}^{(\new)}
        \end{bmatrix}+Q_{\ell_{0}+2}^{(\new)}[O,\Gamma_{\ell_{0}+1}^{(\new)}]\\ 
        &=Z_{\ell_{0}+1}^{(\new)}\edit{\begin{bmatrix}
            \Theta^{(\new)} & (F^{(\new)})^{\Ttran}\\F^{(\new)} & \Ta_{\ell_{0}+1}^{(\new)}
        \end{bmatrix}}+Q_{\ell_{0}+2}^{(\new)}E_{\ell_{0}+2}^{(\new)},
    \end{aligned}
\end{equation*}
where $E_{\ell_{0}+2}^{(\new)}=[O,\Gamma_{\ell_{0}+1}^{(\new)}]$, and the relationship
\begin{equation*}
    E_{\ell+1}^{(\old)}V_{Z}^{(\new)} = (Q_{\ell+1}^{(\old)})^{\Ttran}(BZ_{\ell_{0}}^{(\new)}-Z_{\ell_{0}}^{(\new)}\Theta^{(\new)})= (Q_{\ell_{0}+1}^{(\new)})^{\Ttran}BZ_{\ell_{0}}^{(\new)}
\end{equation*}
is used.

\subsection{Partial reorthogonalization}
\label{sec:RO}
In floating-point arithmetic, round-off error compromises the orthogonality of the basis computed by the block Lanczos me\-thod, leading to the emergence of multiple converged Ritz values, so called ghost eigenvalues. To obtain reliable information about the null space and its dimension, it is imperative to maintain the orthogonality of Lanczos vectors with reorthogonalization strategies.

Commonly employed reorthogonalization schemes include selective reorthogonalization \cite{Parlett1979}, partial reorthogonalization \cite{Simon1984}, and full reorthogonalization. In our implementation, we use partial reorthogonalization, which is cheaper than full reorthogonalization and more effective than selective reorthogonalization~\cite{Simon1982}. Following the strategies described in \cite{Wu2000,Grimes1994}, we define a $W$-recurrence to monitor loss of orthogonality. For brevity, we omit the superscripts $\cdot^{(\old)}$ and $\cdot^{(\new)}$ regarding restarting. Additionally, we focus on the reorthogonalization of the new basis vector $Q_{\ell+1}$ versus $Z_{\ell}=[Q_{1},\dotsc,Q_{\ell}]$.

Supposing that $\ell_{0}d$ Ritz vectors were retained in the last restart, and
\begin{equation*}
    \Theta=\diagm \{\Theta_{1},\dotsc,\Theta_{\ell_{0}}\}
    \quad\text{and}\quad 
    \edit{F=[F_{1},\dotsc,F_{\ell_{0}}]},
\end{equation*} 
the recurrence~\cref{laniter} in finite precision can be written as 
\begin{equation}
    \label{laniterfp}
    \begin{aligned}
        BQ_{i} &= Q_{i}\Theta_{i}+Q_{\ell_{0}}\edit{F_{i}}+\Delta_{i}\quad \text{for}\quad 1\leq i\leq \ell_{0},\\ 
        BQ_{\ell_{0}+1} &= \sum_{i=1}^{\ell_{0}}Q_{i}\edit{F_{i}^{\Ttran}}+Q_{\ell_{0}+1}\Ta_{\ell_{0}+1}+Q_{\ell_{0}+2}\edit{\Tb_{\ell_{0}+1}}+\Delta_{\ell_{0}+1},\\ 
        BQ_{j} &= Q_{j-1}\edit{\Tb_{j-1}^{\Ttran}}+Q_{j}\Ta_{j}+Q_{j+1}\edit{\Tb_{j}}+\Delta_{j}\quad \text{for}\quad \ell_{0}+2\leq j\leq \ell,        
    \end{aligned}
\end{equation}
where the matrices denoted with $\Delta$ account for round-off errors. Following~\cite{Grimes1994,Parlett1998}, we may assume that the matrices $Q_{k}$ have orthonormal columns and 
\begin{equation}
    \label{defmprd}
    \norm{\Delta_{k}}\leq d\sqrt{n}\mpr\norm{B}=: \mpr_{d}
\end{equation}
for $1\leq k\leq \ell+1$, where $\mpr$ denotes the machine precision.
Different from full orthogonalization, we allow $Q_{i}$ and $Q_{j}$ to be not (numerically) orthogonal to each other for $i\not=j$.
The basic idea of partial reorthogonalization is to monitor this loss of orthogonality by examining $\norm{Q_{k}^{\Ttran}Q_{\ell+1}}$ for $1 \leq k \leq \ell$. If this value exceeds a certain threshold $\mpr_{\mathsf{RO}}$, typically chosen as $\mpr_{\mathsf{RO}}=\sqrt{\mpr}$, we perform reorthogonalization. Because computing $\norm{Q_{k}^{\Ttran}Q_{\ell+1}}$ is computationally expensive, we instead aim to calculate and use an upper bound for this value. Reorthogonalization is not performed if the upper bound is below $\mpr_{\mathsf{RO}}$, potentially cutting down computational costs while maintaining a decent level of orthogonality. 

To derive an upper bound for $\norm{Q_{k}^{\Ttran}Q_{\ell+1}}$, we take $j=\ell$ in the third recurrence in \cref{laniterfp}. Multiplying with $Q_{k}^{\Ttran}$ from the left and rearranging terms yields the inequality
    \begin{align*}
        \norm{Q_{k}^{\Ttran}Q_{\ell+1}}&\leq \norm{\Gamma_{\ell}^{-1}}\bignorm{Q_{k}^{\Ttran}BQ_{\ell}-Q_{k}^{\Ttran}Q_{\ell-1}\edit{\Gamma_{\ell-1}^{\Ttran}}-Q_{k}^{\Ttran}Q_{\ell}\Ta_{\ell}-Q_{k}^{\Ttran}\Delta_{\ell}}\\ 
        &\leq \norm{\Gamma_{\ell}^{-1}}\bigl(\norm{Q_{k}^{\Ttran}BQ_{\ell}}+\norm{Q_{k}^{\Ttran}Q_{\ell-1}}\norm{\Tb_{\ell-1}}+\norm{Q_{k}^{\Ttran}Q_{\ell}}\norm{\Ta_{\ell}}+\norm{Q_{k}}\norm{\Delta_{\ell}}\bigr).
    \end{align*} 
For $1\leq k\leq \ell_{0}$, the term $\norm{Q_{k}^{\Ttran}BQ_{\ell}}$ is bounded by using the symmetry of $B$ and the first recurrence in~\cref{laniterfp}:
\begin{equation*}
    \begin{aligned}
        \norm{Q_{k}^{\Ttran}BQ_{\ell}}&=\norm{Q_{\ell}^{\Ttran}BQ_{k}}\leq \norm{Q_{k}^{\Ttran}Q_{\ell}}\norm{\Theta_{k}}+\norm{Q_{\ell_{0}}^{\Ttran}Q_{\ell}}\norm{F_{k}}+\norm{Q_{\ell}}\norm{\Delta_{k}}. 
    \end{aligned}
\end{equation*} 
Combining these two inequalities and using \cref{defmprd} yields
\begin{equation*}
    \norm{Q_{k}^{\Ttran}Q_{\ell+1}} \leq \norm{\Gamma_{\ell}^{-1}}\bigl(\norm{Q_{k}^{\Ttran}Q_{\ell}}(\norm{\Theta_{k}}+\norm{\Ta_{\ell}})+\norm{Q_{\ell_{0}}^{\Ttran}Q_{\ell}}\norm{F_{k}}+\norm{Q_{k}^{\Ttran}Q_{\ell-1}}\norm{\Tb_{\ell-1}}+2\mpr_{d}\bigr).
\end{equation*}

We keep track of the recursive inequalities above using a strictly upper triangular matrix $W$, such that each entry $w_{i,j}$ is an upper bound on $\norm{Q_{i}^{\Ttran}Q_{j}}$ for $1\leq i< j\leq \ell+1$. Assuming that columns $1,\ldots, \ell$ have been determined, the entries $1,\ldots, \ell$ of column $\ell+1$ are determined by the recurrences above as follows:
\begin{equation*}
    w_{k,\ell+1} = \left\{
    \begin{aligned}
        \norm{\Gamma_{\ell}^{-1}}\Bigl(&w_{\ell_{0}+1,\ell}(\norm{\Theta_{k}}+\norm{\Ta_{\ell}})+w_{\ell_{0},\ell} \norm{F_{k}}\\ 
        &+w_{k,\ell-1}\norm{\Tb_{\ell-1}}+2\mpr_{d}\Bigr), &\!\!\!\!\!\!\!\!\text{for } 1\leq k\leq \ell_{0}.\\ 
        \norm{\Gamma_{\ell}^{-1}}\Bigl(&w_{k,\ell}(\norm{\Ta_{k}}+\norm{\Ta_{\ell}})+\sum_{1\leq i\leq \ell_{0}}w_{i,\ell } \norm{F_{i}}\\ 
        &+w_{k+1,\ell}\norm{\Tb_{k}} + w_{k,\ell-1}\norm{\Tb_{\ell-1}}+2\mpr_{d}\Bigr),&\!\!\!\!\!\!\text{for } k=\ell_{0}+1.\\ 
        \norm{\Gamma_{\ell}^{-1}}\Bigl(&w_{k,\ell}(\norm{\Ta_{k}}+\norm{\Ta_{\ell}})+w_{k+1,\ell}\norm{\Tb_{k}}\\ 
        &+w_{k-1,\ell}\norm{\Tb_{k-1}}+w_{k,\ell-1}\norm{\Tb_{\ell-1}}+2\mpr_{d}\Bigr),&\text{for } \ell_{0}+2\leq k\leq \ell.
    \end{aligned}
    \right.
\end{equation*}
Note that the computation of the column $\ell+1$ only involves the previous two columns $\ell-1$ and $\ell$, consistent with the three-term recurrence of the Lanczos method.
The entries of the initial two columns $\ell_{0}+1$ and $\ell_{0}+2$ of $W$ are set to 
$\mpr_{d}$ because $Q_{\ell_{0}+1}$ and $Q_{\ell_{0}+2}$ are reorthogonalized versus $Z_{\ell_{0}}$ immediately after restarting.
We monitor the loss of orthogonality using $W$.
If $w_{k,\ell+1} >\mpr_{\mathsf{RO}}$ for some $k$, we reorthogonalize $Q_{\ell}$ and $Q_{\ell+1}$ versus $Z_{\ell-1}$ and set the strictly upper triangular entries of the corresponding columns $\ell$ and $\ell+1$ of $W$ to $\mpr_{d}$.

\subsection{Pseudocode}

\cref{algoad} incorporates the improvements to~\cref{algo} discussed in this section. For simplicity, we assume that $\dim\mathcal{K}$ is an integer multiple of $d$. The pseudocode uses Matlab's colon notation to denote submatrices.
\begin{algorithm2e}[htbp]
	\caption{Improved randomized small-block Lanczos} 
    \label{algoad}
	\KwIn{Matrix $A\in\R^{m \times n}$ with $m \ge n$ and an unknown number of zero singular values. Perturbation parameter $\epsilon>0$. Block size $d$. Maximum allowed Krylov subspace dimension $\dim \mathcal{K}$.}
    \KwOut{Orthonormal basis $V$ approximating the null space of $A$.}
    Set $B=A^{\Ttran}A+\epsilon D$, with a diagonal matrix $D$ having diagonal entries uniformly i.i.d. in $[0,1]$\;
    Draw Gaussian random  matrix $\Omega\in\R^{n\times d}$\;
    Set $\mpr_{d}=d\sqrt{n}\mpr\norm{B}$ and $\ell_{0}=0$\;
    Compute $Q_{1}$ as an orthonormal basis of $\Omega$\;
    \While{not converged}{
        Compute $Q_{\ell_{0}+1}^{\prime}=BQ_{\ell_{0}+1}$ and  $\Ta_{\ell_{0}+1} = Q_{\ell_{0}+1}^{\Ttran}Q_{\ell_{0}+1}^{\prime}$\;
        Compute QR decomposition $(I-Z_{\ell_{0}+1}Z_{\ell_{0}+1}^{\Ttran})Q_{\ell_{0}+1}^{\prime}=Q_{\ell_{0}+2}\Tb_{\ell_{0}+1}$\; 
        Set $Z_{\ell_{0}+2}=[Z_{\ell_{0}+1},Q_{\ell_{0}+2}]$\;
        \For{$\ell=\ell_{0}+2,\dotsc,\dim\mathcal{K}/d$}{
            Compute $Q_{\ell}^{\prime} = BQ_{\ell}-Q_{\ell-1}\Tb_{\ell-1}$ and $\Ta_{\ell} = Q_{\ell}^{\Ttran}Q_{\ell}^{\prime}$\;
            Compute QR decomposition $Q_{\ell}^{\prime}-Q_{\ell}\Ta_{\ell} =Q_{\ell+1}\Tb_{\ell}$\;
            Perform partial reorthogonalization as discussed in \cref{sec:RO} and set $Z_{\ell}=[Z_{\ell-1},Q_{\ell}]$ \;
        }
        Form block tridiagonal matrix $T_{\ell}$ defined in \cref{defT} and compute its eigenvalue decomposition $T_{\ell}V_{Z}=V_{Z}\Theta$ with eigenvalues in ascending order\; 
        Let $N_{\ell}$ be the number of eigenvalues of $T_{\ell}$ below $3\epsilon$ and set $\ell_{0}=(N_{\ell}+\dim\mathcal{K})/2$\;
        Set $\Theta = \Theta(1:\ell_{0},1:\ell_{0})$ and compute $Z_{\ell_{0}}=Z_{\ell}V_{Z}(:,1:\ell_{0})$\;
        Compute QR decomposition $(I-Z_{\ell_{0}}Z_{\ell_{0}}^{\Ttran})Q_{\ell+1}=Q_{\ell_{0}+1}\Tb_{\ell_{0}}$\;
        Set \edit{$F=[O,\Gamma_{\ell_{0}}]V_{Z}$} and $Z_{\ell_{0}+1} = [Z_{\ell_{0}},Q_{\ell_{0}+1}]$\; 
    }
    \Return{$V=Z_{\ell}V_{Z}(:,1:N_{\ell})$}\;
\end{algorithm2e}

\edit{Now, let us discuss detecting convergence without prior knowledge of the nullity~$N$. 
In the Lanczos method, Ritz values typically converge first to the exterior eigenvalues.
An intuitive approach is to terminate the algorithm once an unwanted small Ritz value exceeding $3\epsilon$ has converged. However, this approach is slow if $\lambda_{N+2} - \lambda_{N+1}$ is small. Alternatively, we can leverage the individual convergence property of the small-block Lanczos method.
Based on the theoretical results discussed in \cref{cormain} for single-vector Lanczos method, as well as numerical observations for its small-block variants, the convergence speed $s$---measured by the number of matrix-vector multiplications required for $d$ newly converged Ritz values---tends to be stable.
By estimating the convergence speed during computation, we terminate the algorithm if no additional Ritz values have converged after $s$ matrix-vector multiplications.}

\subsection{Preconditioning}

In \cref{cormain}, we established that the number of iterations $\ell$ needed for convergence depends on the effective condition number $\kappa=\overline{\sigma}/\underline{\sigma}$. Preconditioning can be used to reduce this number.

\subsubsection{Inner preconditioning}
Given an $m\times m$ symmetric positive preconditioner $P_{L}$ for $A A^{\Ttran}$, one may replace $B=A^{\Ttran}A+\epsilon D$ by \[ B_{L}=A^{\Ttran}P_{L}^{-1}A+\epsilon D \] within the Lanczos method. Besides maintaining the null space (the null spaces of $A^{\Ttran}P_{L}^{-1}A$ and $A$ are identical), the effective condition number is improved to $\kappa = \norm{(A^{\Ttran}P_{L}^{-1}A)(A^{\Ttran}P_{L}^{-1}A)^{\dagger}}^{1/2}$. For $m\gg n$, using such a preconditioner may be expensive because  $P_{L}$ is an $m \times m$ matrix.

\subsubsection{Outer preconditioning} \label{sec:outerprecond}
Given an invertible matrix  $P_{R}\in\R^{n\times n}$ such that $P_{R}P_{R}^{\Ttran}$ is a preconditioner for $A^{\Ttran} A$, one may replace $B=A^{\Ttran}A+\epsilon D$ by \[B_{R}=P_{R}^{-1}A^{\Ttran}AP_{R}^{-\Ttran}+\epsilon D \] within the Lanczos method. This transformation improves the effective condition number to $\kappa = \norm{(AP_{R}^{-\Ttran})(AP_{R}^{-\Ttran})^{\dagger}}$. However, unlike inner preconditioning, this method necessitates the application of both $P_{R}^{-1}$ and $P_{R}^{-\Ttran}$ in a single Lanczos step. Moreover, an additional step of $P_{R}^{-\Ttran}V$ is necessary to post-process the output because $V$ approximates the null space of $AP_{R}^{-\Ttran}$ instead of $A$.

\section{Numerical experiments}
\label{Sec:NumExp}

In this section, we present some numerical results to support the theoretical results and demonstrate the efficiency of our method. 
\edit{Throughout this section, we assume that the nullity $N$ is unknown prior to computation.}
All numerical experiments\footnote{Scripts to reproduce
numerical results are publicly available at \url{https://github.com/nShao678/Nullspace-code}} in this section are implemented in Matlab~2022b and executed with an AMD Ryzen 9 6900HX Processor (8 cores, 3.3--4.9 GHz) and 32 GB of RAM.
\subsection{Benchmark examples} \label{sec:benchmark}

For our experiments, we will consider two applications of null space computation: (1) Counting the connected components of an undirected graph and (2) cohomology computation.

For an undirected graph $G$, the null space $V_{G}$ of its graph Laplacian $L_{G}$ contains information about its connectivity. Specifically, the dimension of $V_{G}$ is the number of connected components, and the nodes in each connected component can be computed from a null space basis. As a specific example for a graph, we consider the mathematicians' collaboration network from MathSciNet~\cite{Rossi2015}. 
As this graph is connected, the null space of $L_{G}$ is trivial. 
To create a more interesting scenario, we consider the sub-graph $G_{0}$ obtained by first deleting nodes of degree less than $N_{0}$
for a fixed positive integer $N_{0}$ (along with the corresponding edges) and then deleting isolated nodes.
\cref{tabLap} provides statistics in the resulting graphs for $N_{0}=3$ and $N_{0}=10$. As the graph Laplacian $A$ is symmetric positive semi-definite, we will replace $B=A^{\Ttran}A+\epsilon D$ by $B=A+\epsilon D$ in line 1 of~\cref{algoad} for these examples. Moreover, in order to accelerate convergence, we use outer preconditioning (see \cref{sec:outerprecond}) with the preconditioner $P_R$ returned by the incomplete Cholesky decomposition of $A$, using the Matlab function 
\begin{equation}
    \label{defichol}
    P_R=\mathtt{ichol}(A, \mathtt{struct}\text{(`type',`ict',`droptol',$10^{-3}$,`diagcomp',$0.1$)}).
\end{equation}
For $N_{0}=10$, this improves the effective condition number $\kappa=\overline{\sigma}/\underline{\sigma}$ from $6227.1$ to $23.2$.

\begin{table}[htbp]
    \centering
    \caption{Statistics on graph Laplacians for MathSciNet collaboration network after preprocessing.}
    \label{tabLap}
    \begin{tabular}{|c|c|c|c|}
        \hline
        $N_{0}$ & \#nodes & \#connected components & \edit{$\underline{\sigma}$}\\ \hline 
        3 & 168692 & 122 &\edit{0.027} \\ \hline 
        10 & 39890 & 66 & \edit{0.041}\\ \hline 
    \end{tabular}
\end{table}

For cohomology computations, we select the matrices from the SuiteSparse collection~\cite{Davis2011} listed in \cref{tabSSmatrices}.
\begin{table}[htbp]
    \centering
    \caption{SuiteSparse collection matrices related to cohomology computations.}
    \label{tabSSmatrices}
    \begin{tabular}{|c|c|c|c|c|c|c|}
        \hline
        Name & \#rows & \#columns & nnz & type & nullity & \edit{$\underline{\sigma}^{2}$} \\ \hline 
        Franz7 & 10164 & 1740 & 40424 & Rings & 113 &\edit{5.5}\\ \hline
        Franz8 & 16728 & 7176 & 100368 & Rings & 1713 &\edit{2.8}\\ \hline
        Franz9 & 19588 & 4164 & 97508 & Rings & 619 &\edit{8.3}\\ \hline
        Franz10 & 19588 & 4164 & 97508 & Rings & 619 &\edit{8.9}\\  \hline
        GL7d12 & 8899 & 1019 & 37519 & $\mathrm{GL}_{7}(\mathbb{Z})$ & 59&\edit{3.0}\\ \hline 
        GL7d13 & 47271 & 8899 & 356232 & $\mathrm{GL}_{7}(\mathbb{Z})$ & 961&\edit{1.3}\\ \hline
        GL7d14 & 171375& 47271  & 1831183 & $\mathrm{GL}_{7}(\mathbb{Z})$ & 7939&\edit{2.9}\\ \hline
    \end{tabular}
\end{table}

\subsection{Single-vector Lanczos}

We first assess the convergence of the single-vector Lanczos method with full reorthogonalization and without restarting for the graph Laplacian with $N_{0}=10$ from \cref{sec:benchmark}, utilizing the incomplete Cholesky preconditioner~\cref{defichol}.

We varied the perturbation level $\epsilon$ from $4^{-4}$ to $4^{-10}$ and ran single-vector Lanczos method for $\ell=1000$ iterations to ensure convergence. 
\redit{We recall that symmetric positive semi-definiteness of the graph Laplacian allows us to use $B=A+\epsilon D$ instead of $B=A^{\Ttran}A+\epsilon D$ in \cref{algo}. As a consequence, the perturbation result in \cref{thmVANg} becomes $\norm{V_{N}^{\Ttran}AV_{N}}\leq \epsilon^{2}/(\underline{\sigma}^{2}-\epsilon)$.}
\cref{tabConNull} contains $\lambda_{N}(T_{\ell})$, the $N$th smallest eigenvalue of $T_{\ell}$, and norms $\norm{V^{\Ttran}AV}$ and $\norm{AV}$. As expected, $\lambda_{N}(T_{\ell})$ and $\norm{V^{\Ttran}AV}$ are observed to be of the order $\order(\epsilon)$ and $\order(\epsilon^{2})$, respectively.

\begin{table}[htbp]
    \centering
    \caption{Convergence of null space, where $\epsilon$ is taken as $4^{-k}$ for $k=4,\dotsc,10$.}
    \label{tabConNull}
    \begin{tabular}{|c|c|c|c|c|c|c|c|}
        \hline
        $\epsilon$ & 3.9e-03 & 9.8e-04 & 2.4e-04 & 6.1e-05 & 1.5e-05 & 3.8e-06 & 9.5e-07 \\ \hline 
        $\lambda_{N}(T_{\ell})$ & 3.7e-03 & 9.2e-04 & 2.3e-04 & 5.8e-05 & 1.4e-05 & 3.6e-06 & 9.0e-07 \\ \hline 
        $\norm{AV}$ &4.7e-03 & 1.2e-03 & 2.9e-04 & 7.4e-05 & 1.8e-05 & 4.6e-06 & 1.1e-06 \\ \hline
        $\norm{V^{\Ttran}AV}$ &1.5e-06 & 9.2e-08 & 5.7e-09 & 3.6e-10 & 2.2e-11 & 1.4e-12 & 8.7e-14 \\ \hline
    \end{tabular}
\end{table}

The result of \cref{thmmain} predicts that the Ritz values of the single-vector Lanczos method converge individually to the perturbed zero eigenvalues, that is, some converge faster than others.. The numerical results shown in \cref{figConHis} for $\epsilon=4^{-10}$ confirm this result, showing that the number of converged Ritz values grows steadily, a Ritz value converges approximately every $10$ steps of the Lanczos method.

\begin{figure}[htbp]
    \centering
    \includegraphics[width=\figsize]{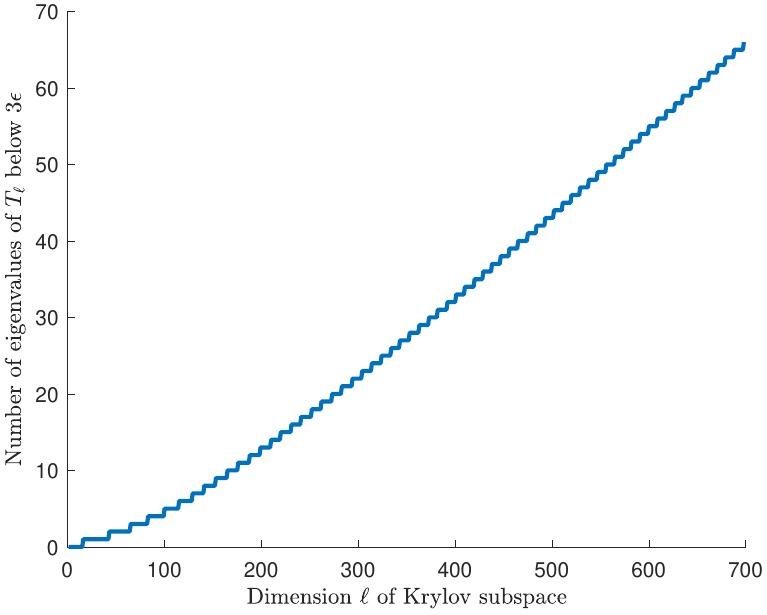}
    \caption{Convergence history of single-vector Lanczos with $\epsilon=4^{-10}$ for graph Laplacian: Number of converged Ritz values vs. number of \edit{matrix-vector multiplications}.}
    \label{figConHis}
\end{figure}

\subsection{Small-block Lanczos method with restarting and partial reorthogonalization}

We now investigate the performance of \cref{algoad}, the small-block Lanczos method with  restarting and partial reorthogonalization. For all remaining experiments, we set the perturbation parameter to $\epsilon=10^{-3}$. 

\subsubsection{Comparison with \texttt{null}}

First, we compare \cref{algoad} with the Matlab built-in function \texttt{null}, which is based on the SVD, for the first six matrices from \cref{tabSSmatrices}. 
The obtained numerical results are presented in \cref{tabCH}.
For all parameter choices, the computed nullity matches the actual nullity from \cref{tabSSmatrices}, \edit{and the approximate basis $V\in\R^{n\times N}$ has almost orthonormal columns in the sense of $\norm{V^{\Ttran}V-I}_{\fro}\leq nN\mathbf{u}$, where $\mathbf{u}\approx 2.22\times 10^{-16}$ is the machine precision.}
It can be observed that restarting is very effective at limiting the dimension of the Krylov subspace.
Although increasing the block size increases the number of matrix-vector multiplications, this increase is, for smaller $d$, outpaced by the higher efficiency of block matrix-vector 
multiplications.
Once the block size becomes too large, both the iteration numbers and execution times start increasing.
Based on our experiments, a suitable strategy for choosing the block size is $d=\dim \mathcal{K}/16$, where $\dim \mathcal{K}$ is the maximum dimension of the Krylov subspace before restarting. \cref{algoad} is faster than \texttt{null} for all examples, and the difference becomes more significant as the matrix size increases. \redit{Although this clearly demonstrates the potential of \cref{algoad}, the reductions in execution time relative to \texttt{null} are not as dramatic as one might hope; computing larger-dimensional null spaces remains a challenging problem and further improvements are needed.}

\begin{table}[htbp]
    \centering
    \caption{Number of matrix vector multiplications and execution time of \cref{algoad} applied to cohomology computations. $\dim \mathcal{K}$ is the maximum dimension of Krylov subspace before restarting. The last column refers to Matlab's built-in function \texttt{null}.}
    \label{tabCH}
    \subfloat[Franz7 with nullity $113$ and $\dim\mathcal{K}=256$.]{
    \begin{tabular}{|c|c|c|c|c|c|c|}
        \hline
        Block size & $d=2$ & $d=4$ & $d=8$ & $d=16$ & $d=32$ & \texttt{null} \\ \hline 
        $\#$matvec & 2520 & 2616 & 2824 & 3232 & 3904& $\times $\\ \hline 
        Time (s) &0.9023 & 0.7106 & 0.5412 & 0.6125 & 0.6515 & 1.5449 \\ \hline 
    \end{tabular}
    }

    \subfloat[Franz8 with nullity $1713$ and $\dim\mathcal{K}=3072$.]{
    \begin{tabular}{|c|c|c|c|c|c|c|}
        \hline
        Block size  & $d=16$ & $d=32$ & $d=64$ & $d=128$ & $d=256$ & \texttt{null}\\ \hline 
        $\#$matvec &22848 & 22880 & 23808 & 25216 & 26880 & $\times $\\ \hline
        Time (s) &96.83 & 77.53 & 71.26 & 69.55 & 77.29 & 82.47 \\ \hline 
    \end{tabular}
    }

    \subfloat[Franz9 with nullity $619$ and $\dim\mathcal{K}=1024$.]{
    \begin{tabular}{|c|c|c|c|c|c|c|}
        \hline
        Block size & $d=8$ & $d=16$ & $d=32$ & $d=64$ & $d=128$ & \texttt{null} \\ \hline 
        $\#$matvec &12496 & 13008 & 12960 & 14528 & 16000 &$\times $ \\ \hline
        Time (s) &17.68 & 13.63 & 11.11 & 9.852 & 9.572 & 21.47 \\ \hline
    \end{tabular}
    }

    \subfloat[Franz10 with nullity $619$ and $\dim\mathcal{K}=1024$.]{
    \begin{tabular}{|c|c|c|c|c|c|c|}
        \hline
        Block size & $d=8$ & $d=16$ & $d=32$ & $d=64$ & $d=128$ & \texttt{null} \\ \hline 
        $\#$matvec &12440 & 12784 & 12960 & 14464 & 16000 &$\times $\\ \hline
        Time (s) &17.54 & 12.81 & 10.43 & 9.74 & 9.028 & 21.15\\ \hline
    \end{tabular}
    
    }

    \subfloat[GL7d12 with nullity $59$ and $\dim\mathcal{K}=128$.]{
        \begin{tabular}{|c|c|c|c|c|c|c|}
            \hline
            Block size & $d=1$ & $d=2$ & $d=4$ & $d=8$ & $d=16$ & \texttt{null} \\ \hline 
            $\#$matvec &1439 & 1482 & 1504 & 1720 & 1920  &$\times $ \\ \hline
            Time (s) &0.2188 & 0.1858 & 0.1545 & 0.1752 & 0.2387 & 0.6476 \\ \hline
        \end{tabular}
    }
    
    \subfloat[GL7d13 with nullity $961$ and $\dim\mathcal{K}=2048$.]{
        \begin{tabular}{|c|c|c|c|c|c|c|}
            \hline
            Block size & $d=16$ & $d=32$ & $d=64$ & $d=128$ & $d=256$& \texttt{null} \\ \hline 
            $\#$matvec &17552 & 18176 & 19072 & 19712 & 23040 & $\times $\\ \hline
            Time (s) &55.20 & 44.71 & 38.81 & 37.29 & 41.50 & 212.2 \\ \hline
        \end{tabular}
    }

\end{table}

\subsubsection{Comparison with large-block Lanczos}

Traditionally, the block Lanczos method with restarting is used with a block size  that is  slightly larger than the nullity. Obviously, when memory is a constraint, the small-block or even single-vector Lanczos methods with restarting are preferred. Even when memory is not a constraint, using the small-block version can be beneficial.
To illustrate this, we consider two examples: the matrix ``Franz9'' from cohomology computations and the graph Laplacian with $N_{0}=3$, which have nullities $619$ and $122$, respectively. We compare the performance of the small-block and large-block Lanczos methods. For the large-block Lanczos method, we set the maximum dimension of the Krylov subspace before restarting to be approximately four times the nullity, \ie $2560$ and $512$, respectively.

As shown in \cref{tabCHLB}, the large-block Lanczos method requires more matrix-vector multiplications and execution time compared to the small-block version. The nullity has been computed correctly by all methods. One advantage of the large-block Lanczos method is that it does not require small perturbations such as $B=A^{\Ttran}A+\epsilon D$ or $B=A+\epsilon D$, thanks to the use of a sufficiently large-block size. While the absence of perturbations yields more accurate results, the overall computational cost remains unaffected because the additional computational cost from applying the diagonal perturbation matrix $D$ is negligible.

\begin{table}[htbp]
    \centering
    \caption{Number of matrix vector multiplications, execution time, and null space residual of~\cref{algoad} applied to ``Franz9'' and the graph Laplacian with $N_{0}=3$. $\dim \mathcal{K}$ is the maximum dimension of Krylov subspace before restarting. The last column refers to the block Lanczos method applied to the original matrix (without diagonal perturbations). The algorithm is stopped once $\norm{AV}\leq 10^{-4}$ for ``Franz9'' or $\norm{AV}\leq 1.2\times 10^{-3}$ for the graph Laplacian is detected before restarting.}
    \label{tabCHLB}
    \subfloat[Franz9 with nullity $619$ and $\dim\mathcal{K}=2560$.]{
    \begin{tabular}{|c|c|c|c|c|c|c|}
        \hline
        &\multicolumn{5}{|c|}{Small-block Lanczos}&Block Lanczos\\ \hline
        Block size & $d=40$ & $d=80$ & $d=160$ & $d=320$ & $d=640$ & $d=640$   \\ \hline 
        $\#$matvec & 7960 & 6960 & 7200 & 8960 & 10240 & 10240   \\ \hline
        Time (s) &13.57 & 10.27 & 9.925 & 12.32 & 11.23 & 11.38  \\ \hline
        $\norm{AV}$& 9.6e-05 & 9.6e-05 & 1.0e-04 & 9.6e-05 & 9.6e-05 & 1.2e-05\\ \hline
    \end{tabular}}

\subfloat[Graph Laplacian with nullity $122$ and $\dim \mathcal{K}=512$.]{
    \begin{tabular}{|c|c|c|c|c|c|c|}
        \hline
        &\multicolumn{5}{|c|}{Small-block Lanczos}&Block Lanczos\\ \hline
        Block size & $d=8$ & $d=16$ & $d=32$ & $d=64$ & $d=128$ & $d=128$  \\ \hline 
        $\#$matvec & 1608 & 1632 & 1664 & 2048 & 3584 & 3584   \\ \hline
        Time (s) &48.55 & 43.52 & 40.29 & 44.42 & 70.72 & 69.70  \\ \hline
        $\norm{AV}$& 1.0e-03 & 1.0e-03 & 1.0e-03 & 1.1e-03 & 1.0e-03 & 7.5e-04\\ \hline
    \end{tabular}}

\end{table}

\subsubsection{Large-scale problems}

In the final experiment, we address a particularly challenging problem where both the dense method and the traditional block Lanczos method are infeasible due to memory constraints. For the matrix ``GL7d14'', it is impossible to apply the \texttt{null} function because the dense matrix cannot be stored in memory. Similarly, the large-block Lanczos method encounters a memory bottleneck, making it impractical to perform a restart after even just two iterations. \cref{tabCHLS} demonstrates that the the small-block Lanczos method successfully computes the null space within a Krylov subspace of dimension 10240. Remarkably, this dimension is only 1.29 times larger than the nullity, highlighting the efficiency and feasibility of the small-block method in handling such large-scale problems.

\begin{table}[htbp]
    \centering
    \caption{Number of matrix vector multiplications and execution time of \cref{algoad} applied to ``GL7d14'' with nullity $7939$. The maximum dimension of Krylov subspace before restarting is $10240$. The computed nullity is correct and $\norm{AV}\approx1.4\times10^{-4}$}
    \label{tabCHLS}
    \begin{tabular}{|c|c|c|c|c|c|c|c|}
        \hline
        Block size & $d=16$ & $d=32$ & $d=64$ & $d=128$ & $d=256$ & $d=512$   \\ \hline 
        $\#$matvec & 133456 & 133344 & 132288 & 132352 & 133888 & 140800   \\ \hline
        Time(s) &11516 & 8137.4 & 6986.4 & 6375.0 & 6218.7 & 6716.4 \\ \hline
    \end{tabular}    
\end{table}

\section{Conclusion}
In this work, we proposed a novel randomized small-block Lanczos method that incorporates modern Krylov subspace techniques such as restarting and partial reorthogonalization to compute null spaces of large sparse matrices. For the examples considered, our implementation outperforms both the default SVD-based null space solver and the traditional block Lanczos method. Additionally, we provide a theoretical analysis of the convergence for the single-vector case, that is, $d = 1$. It remains an open problem to derive convergence bounds that reflect the good performance observed for small $d>1$.

\section*{Acknowledgments}
The authors thank Yuji Nakatsukasa and Rikhav Shah for helpful discussions on this work. \edit{They also thank the two anonymous
referees, who provided useful comments and critical insights.}

\end{document}